\documentclass[10pt]{amsart}
\usepackage{graphicx}
\usepackage{amssymb}
\usepackage{amsthm}
\usepackage{amscd}
\usepackage{multicol, verbatim}
\usepackage{amsmath, amsthm, mathrsfs, hyperref}
\hypersetup{linkcolor=black, citecolor=black, colorlinks=true, final=true, breaklinks, hyperindex=false,
pdfauthor={Matthew Chasse, Lukasz Grabarek, Mirko Visontai}}

\theoremstyle{plain}
   \newtheorem{theorem}{Theorem}[section]
   \newtheorem{proposition}[theorem]{Proposition}
   \newtheorem{lemma}[theorem]{Lemma}
   \newtheorem{corollary}[theorem]{Corollary}
   \newtheorem{problem}{Problem}
   \newtheorem{conjecture}[theorem]{Conjecture}
   \newtheorem{question}{Question}[section]
\theoremstyle{definition}
   
   \newtheorem{example}[theorem]{Example}

\theoremstyle{remark}
   \newtheorem{remark}[theorem]{Remark}


\def\RR{\mathbb{R}}

\def\CC{\mathbb{C}}

\def\NN{\mathbb{N}}

\def\PP{\mathcal{P}}
\def\BB{\mathcal{B}}

\def\LL{\mathscr{L}}
\def\TT{\mathscr{T}}

\newcommand{\bea}{\begin{eqnarray*}} 
\newcommand{\eea}{\end{eqnarray*}}  
\newcommand{\bnum}{\begin{enumerate}}
\newcommand{\enum}{\end{enumerate}}
\newcommand{\bit}{\begin{itemize}}  
\newcommand{\eit}{\end{itemize}}
\newcommand{\beq}{\begin{equation}}  
\newcommand{\eeq}{\end{equation}}



\def\today{\ifcase\month\or
       January\or February\or March\or April\or May\or June\or
       July\or August\or September\or October\or November\or December\fi
       \space\number\day, \number\year}

\date{\today}
\title[Stable regions of Tur\'an expressions]{Stable regions of Tur\'an expressions}

\author{Matthew Chasse}
\address{Department of Mathematics, Royal Institute of Technology, SE-100 44 Stockholm, Sweden}
\email{chasse@math.kth.se}

\author{Lukasz Grabarek}
\address{Department of Mathematics, University of Hawaii, Honolulu, HI 96822}
\email{lukasz@math.hawaii.edu}

\author{Mirk\'o Visontai}
\address{Department of Mathematics, Royal Institute of Technology, SE-100 44 Stockholm, Sweden}
\email{visontai@math.kth.se}

\keywords{Hurwitz stability, orthogonal polynomials, Tur\'an inequalities, Laguerre inequalities}
\subjclass[2010]{Primary 26D05; Secondary 30C10}

\begin{document} 

\maketitle

\begin{abstract}
Consider polynomial sequences that satisfy a first-order differential recurrence. 
We prove that if the recurrence is of a special form, then the Tur\'an expressions for the sequence are weakly Hurwitz stable (non-zero in the open right half-plane). 
A special case of our theorem settles a problem proposed by S.~Fisk that the Tur\'an expressions for the univariate Bell polynomials are weakly Hurwitz stable. 
We obtain related results for Chebyshev and Hermite polynomials, and propose several extensions involving Laguerre polynomials, Bessel polynomials, and Jensen polynomials associated to a class of 
real entire functions.   
\end{abstract}

\section{Introduction}

Given a sequence of polynomials $\PP=\{P_k\}_{k=0}^\infty$, with degree $\deg(P_k)=k$, we adopt the notation 
\begin{equation}\label{turan-ineq}
\TT_k(\PP;x) = (P_{k+1}(x))^2 - P_{k+2}(x)P_{k}(x)
\end{equation}
for the $k$-th \emph{Tur\'an expression}.

Let $D=\partial/\partial x$ and $\NN=\{0,1,2,\dots\}$ throughout.  A polynomial in $\CC[x]$ is called \emph{weakly Hurwitz stable} if it is 
non-zero in the right half-plane $\Re~x>0$.  Our investigation is motivated, in part, by a problem proposed by S.~Fisk.

\begin{problem}[{S.~Fisk \cite[p.~724, Question~33]{Fisk}}]\label{fisk-conjecture}
Let $\BB=\{B_k(x)\}_{k=0}^\infty$, where $B_k$ is the $k$-th univariate Bell polynomial given by the recurrence
\[
B_{k+1}(x) = x(B_{k}(x) + DB_{k}(x)),\qquad k\in\NN,
\]
with initial value $B_0(x) = 1$.  Then $\TT_k(\BB;x)$ is weakly Hurwitz stable for $k\in\NN$.
\end{problem}

Problem \ref{fisk-conjecture} concerns the univariate Bell polynomials, which have significance in combinatorics (\cite{Carlitz}, \cite{HHN}, \cite[p.~76]{Riordan}), and states that an 
associated Tur\'an expression is weakly Hurwitz stable, a property relevant in the analysis of linear systems \cite[p.~465]{AM}.  Unfortunately, Fisk passed away before he could finish his book, 
and the tentative proof of Problem \ref{fisk-conjecture} which appears in the electronic draft \cite[p.~651,~Lemma~21.91]{Fisk} is incorrect, as we show in Example \ref{coneless}  
(this may be why it also appears in the open question list \cite[p.~724, Question~33]{Fisk}).  By means of the following theorem we are able to answer Problem \ref{fisk-conjecture} in the affirmative.

\begin{theorem}\label{main-theorem}

Let $\PP=\{P_k\}_{k=0}^\infty$ be a set of real polynomials such that $\deg(P_k)=k$, for $k\in\NN$
\renewcommand\theenumi{\roman{enumi}}
\begin{enumerate}
\item \label{mt-1}
If the sequence $\PP$ satisfies
\[
P_{k+1}(x) = a(x+b)(D+c_{k})P_{k}(x),
\]
$a\neq 0$, $b\ge0$, and $c_{k+1}\ge c_{k}>0$ for all $k\in\NN$, 
then $\TT_k(\PP;x-b)$ is weakly Hurwitz stable for all $k\in\NN$.

\item \label{mt-3}
Let $\PP=\{P_k\}_{k=0}^\infty$ be a set of polynomials which satisfy
\[
P_{k+1}(x)=c(-ax + b + D)P_{k}(x)
\]
for $a>0,b,c\in\RR$, for each $k\in\NN$. 
For each $k \ge 1$, all $k$ zeros of $P_k$ are real (see Remark \ref{rem-real-roots}); let $M_k$ denote that largest and $m_k$ denote the smallest zero of $P_k$. 
Then for each $k\in\NN$, $\TT_k(\PP;x)$ is non-zero when $\Re~x>M_{k+1}$ or $\Re~x<m_{k+1}$. 

\item \label{mt-2}
If the sequence $\PP$ satisfies
\[(xD-k)P_k(x) = P_{k-1}(x)\]
for all $k\ge 1$, and each $P_k$ has only non-positive zeros, then $\TT_k(\PP;x)$ is weakly Hurwitz stable for all $k\in\NN$.

\end{enumerate}
\end{theorem}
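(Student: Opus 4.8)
The plan is to treat all three parts through the common observation that each recurrence has the form $P_{k+1}=L_kP_k$ for a first-order differential operator $L_k$, and that $\TT_k(\PP;x)=(P_{k+1})^2-P_{k+2}P_k$ is, up to sign, the $2\times2$ Hankel determinant of the three consecutive members $P_k,P_{k+1},P_{k+2}$; weak Hurwitz stability is the assertion that this determinant has no zero with $\Re x>0$. Part (\ref{mt-1}) is the crux, since it contains Fisk's Problem \ref{fisk-conjecture} as the case $a=1$, $b=0$, $c_k\equiv1$. First I would normalize: rescaling the $P_k$ by nonzero constants leaves the zero set of $\TT_k$ unchanged, so I may take $a=1$; and the substitution $x\mapsto x-b$ turns the operator $a(x+b)(D+c_k)$ into $x(D+c_k)$ and turns $\TT_k(\PP;x-b)$ into $\TT_k(\mathcal Q;x)$ for the shifted sequence $Q_k(x)=P_k(x-b)$. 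Thus it suffices to show that $\TT_k(\mathcal Q;x)$ is nonzero for $\Re x>0$ when $Q_{k+1}=x(D+c_k)Q_k$. Writing $x(D+c_k)=x\,e^{-c_kx}D\,e^{c_kx}$ and using that multiplication by $e^{c_kx}$, differentiation, and multiplication by $x$ each preserve the class of real polynomials with only non-positive zeros, an induction shows every $Q_k$ has only real, non-positive zeros; the zeros of $Q_k'+c_kQ_k$ lie strictly left of $0$, as one sees by solving $\sum_i(x-r_i)^{-1}=-c_k$ with all $r_i\le0$ and $c_k>0$.

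With the $Q_k$ real-rooted, the main work is to control the single cross term $Q_{k+2}Q_k$ against $Q_{k+1}^2$ throughout the right half-plane, whereas the classical Tur\'an inequality only supplies information on the real axis. I would attack this by induction on $k$, first splitting off the boundary zeros at $x=0$ (forced since $Q_j$ is divisible by $x$ for $j\ge1$) by writing $\TT_k(\mathcal Q;x)=\gamma\,x^{m}R_k(x)$, and then proving that the residual factor $R_k$ has all of its zeros in the closed left half-plane; a direct computation in the Bell case ($c_k\equiv1$), where $\TT_k$ factors as a power of $x$ times a manifestly left half-plane-rooted polynomial, suggests such a splitting. To locate the zeros of $R_k$ I would use the even/odd decomposition $\TT_k(\mathcal Q;x)=A(x^2)+xB(x^2)$ together with a Hermite--Biehler-type interlacing criterion (in the form allowing zeros on the imaginary axis), verifying the required interlacing of $A$ and $B$ from the real-rootedness above and, crucially, from the monotonicity hypothesis $c_{k+1}\ge c_k$. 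Establishing exactly this interlacing is the principal obstacle: it is the step where $c_{k+1}\ge c_k$ must enter in an essential way, and where the argument genuinely leaves the real axis.

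For part (\ref{mt-3}) I would begin from Remark \ref{rem-real-roots}: since $a>0$, the operator $c(-ax+b+D)=c\,e^{ax^2/2-bx}D\,e^{-ax^2/2+bx}$ is the conjugate of $D$ by multiplication with the Laguerre--P\'olya factor $e^{-ax^2/2+bx}$, so it preserves real-rootedness, whence every $P_k$ has only real zeros and the zeros of consecutive members interlace. On the real half-line $x>M_{k+1}$, and symmetrically $x<m_{k+1}$, I would read off the signs of $P_k,P_{k+1},P_{k+2}$ from the interlacing so that $(P_{k+1})^2$ and $-P_{k+2}P_k$ carry the same sign, giving $\TT_k(\PP;x)\neq0$ there. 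To pass from these half-lines to the half-planes $\Re x>M_{k+1}$ and $\Re x<m_{k+1}$ I would factor $P_k,P_{k+1},P_{k+2}$ over their real zeros and prove the strict modulus estimate $|P_{k+2}(x)P_k(x)|<|P_{k+1}(x)|^2$, using that each factor $x-r$ has $\Re(x-r)>0$ once $\Re x$ exceeds every zero; this comparison is more routine than the estimate in part (\ref{mt-1}).

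Finally, for part (\ref{mt-2}) I would first solve the coefficient recursion forced by $(xD-k)P_k=P_{k-1}$, which expresses each $P_k$ explicitly through its leading coefficients (equivalently, a generating function of the form $\sum_k P_k(x)t^k=e^{-t}\Lambda(xt)$); the hypothesis that every $P_k$ has only non-positive zeros then places the family in the Laguerre--P\'olya setting of Jensen polynomials referenced in the abstract. I would transfer the stability of $\TT_k(\PP;x)$ to the same even/odd interlacing criterion used in part (\ref{mt-1}), with the non-positive-zero hypothesis playing the role that real-rootedness plays there, and reducing directly to part (\ref{mt-1}) in those subfamilies (such as the Bell polynomials) whose raising operator already has the form $a(x+b)(D+c_k)$. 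In summary, the whole theorem rests on the single zero-location estimate at the heart of part (\ref{mt-1}), with parts (\ref{mt-3}) and (\ref{mt-2}) following by, respectively, a modulus comparison and an identification with Jensen polynomials.
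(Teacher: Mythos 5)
Your proposal has genuine gaps in all three parts, and in each case the missing or failing step is precisely what the paper's argument is built to handle. For part (\ref{mt-1}), your normalization and the real-rootedness induction are fine and match the paper, but you then concede that the Hermite--Biehler interlacing of the even/odd parts of $\TT_k$ --- the step ``where the argument genuinely leaves the real axis'' and where $c_{k+1}\ge c_k$ must enter --- is ``the principal obstacle,'' and you never supply it; what you have is a plan, not a proof. The paper (Proposition \ref{lem:main}) needs no even/odd decomposition: from the recurrence, $\TT_{k-1}$ vanishes at $x_0$ (away from common zeros, which by construction occur only at $-b$) if and only if
\[
\frac{P'_k(x_0)}{P_k(x_0)}-\frac{P'_{k-1}(x_0)}{P_{k-1}(x_0)}+(c_k-c_{k-1})=0,
\]
and the partial-fraction pairing along the strictly interlacing zeros,
\[
\frac{1}{x_0-r_k}+\sum_{j=2}^{k-1}\frac{r_j-s_j}{(x_0-r_j)(x_0-s_j)},\qquad r_j-s_j>0,
\]
has every term of strictly negative imaginary part when $\Im~x_0>0$ (Lemma \ref{arg-arg}) and every term positive when $x_0$ is real with $\Re~x_0>-b$; the monotonicity hypothesis enters only through the nonnegative real constant $c_k-c_{k-1}$, which cannot restore cancellation in either case. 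This single observation is what your outline lacks, and with it the half-plane statement is immediate.

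For part (\ref{mt-3}) your route fails outright: the modulus estimate $|P_{k+2}(x)P_k(x)|<|P_{k+1}(x)|^2$ on $\Re~x>M_{k+1}$ is false. Take the Hermite case $H_{k+1}=(2x-D)H_k$ (so $c=-1$, $a=2$, $b=0$) with $k=0$: then $M_1=0$, and at $x=0.1+i$ one computes $|H_2(x)H_0(x)|=|4x^2-2|\approx 6.01$ while $|H_1(x)|^2=4|x|^2=4.04$, even though $\TT_0=H_1^2-H_2H_0\equiv 2$ never vanishes. Since $P_{k+2}P_k$ and $P_{k+1}^2$ have equal degree and equal leading coefficients, their ratio tends to $1$ along vertical lines, and the nonvanishing of $\TT_k$ comes from phase cancellation rather than modulus domination; the paper's Proposition \ref{hermite-type} again reduces to the logarithmic-derivative difference $P'_k/P_k-P'_{k-1}/P_{k-1}\neq 0$ and applies Lemma \ref{arg-arg}, the restriction to $\Re~x>M_{k+1}$ or $\Re~x<m_{k+1}$ serving to make all numerators in the paired sum of one sign. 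Part (\ref{mt-2}) is likewise not reduced to anything you prove: the hypothesis $(xD-k)P_k=P_{k-1}$ is \emph{not} the Jensen/Laguerre relation $(k-xD)P_k=kP_{k-1}$ --- the Tur\'an stability for that relation is exactly the paper's open Question \ref{q:jensen} --- and reducing ``to part (\ref{mt-1}) in subfamilies such as the Bell polynomials'' does not address the general hypothesis. The paper's Proposition \ref{polar-type} handles it directly: writing $P_{k-1}=x^{k+1}Dx^{-k}P_k$ gives interlacing by Rolle's theorem, the vanishing of $\TT_{k-1}$ reduces to $P'_{k+1}/P_{k+1}-P'_k/P_k-1/x\neq 0$ for $\Re~x>0$, and the extra $-1/x$ is absorbed as $r_1/(x_0(x_0-r_1))$ into a paired sum to which Lemma \ref{arg-arg} applies once more.
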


\begin{remark}\label{rem-real-roots}
Note that the types of sequences in Theorem \ref{main-theorem} parts \eqref{mt-1} and \eqref{mt-3} consist of polynomials which have only real zeros.  This follows immediately from the recurrences, since the operators $a(x+b)(D+c_{k-1})$ and $c(-ax + b + D)$ preserve the property of having only real zeros (cf. \cite[p. 155]{RS}).  Although real, non-positive zeros are required in the hypothesis in part \eqref{mt-2}, the operator $(xD-k)$ preserves the property of having real zeros for polynomials of degree $k$ \cite[p. 97]{RS}.  In addition, in order to avoid a vacuous case for the theorem in parts \eqref{mt-1} and \eqref{mt-3} one should take $P_0$ to be a non-zero real constant. 
\end{remark}

Indeed, with $a =1$, $b=0$, and  $c_k =1$ for all $k\in\NN$, Theorem~\ref{main-theorem} \eqref{mt-1} establishes Problem \ref{fisk-conjecture}. 


\subsection{A Brief History of Tur\'an Expressions}
While obtaining a result concerning the root separation of consecutive Legendre polynomials, P.~Tur\'an \cite{T} proved the following theorem.

\begin{theorem}[\cite{T}]\label{turan-theorem}
Let $P_k$ be the $k$-th Legendre polynomial, and $\PP=\{P_k\}_{k=0}^\infty$ be the sequence of Legendre polynomials.  Then
\begin{equation}\label{TuransLegendre}
\TT_k(\PP;x) \ge 0, 
\end{equation}
for all $-1\le x\le 1$, and $k\in\NN$, where equality holds only for $|x|=1$.  
\end{theorem}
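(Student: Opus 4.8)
The plan is to reduce the Tur\'an expression to a manifestly signed form using the classical contiguous relations for Legendre polynomials, and then to read off both the interior positivity and the boundary equality from that form. First I would record the two standard derivative identities
\[
(1-x^2)P_n'(x) = n\bigl(P_{n-1}(x) - xP_n(x)\bigr), \qquad (1-x^2)P_n'(x) = (n+1)\bigl(xP_n(x) - P_{n+1}(x)\bigr),
\]
which let me solve for the neighbors of $P_{k+1}$:
\[
P_k = xP_{k+1} + \frac{(1-x^2)P_{k+1}'}{k+1}, \qquad P_{k+2} = xP_{k+1} - \frac{(1-x^2)P_{k+1}'}{k+2}.
\]
Substituting these into $\TT_k(\PP;x) = (P_{k+1})^2 - P_{k+2}P_k$ and simplifying, the cross terms collapse and one obtains the factorization
\[
\TT_k(\PP;x) = (1-x^2)\left[P_{k+1}^2 - \frac{x\,P_{k+1}P_{k+1}'}{(k+1)(k+2)} + \frac{(1-x^2)(P_{k+1}')^2}{(k+1)(k+2)}\right].
\]

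The factor $1-x^2$ is already the geometric heart of the statement: it vanishes exactly at $x=\pm1$ and is positive on the open interval. Evaluating the bracket at the endpoints using $P_n(\pm1) = (\pm1)^n$ and $P_n'(\pm1) = (\pm1)^{n-1}\tfrac{n(n+1)}{2}$ gives the constant value $\tfrac12$, so $\TT_k$ vanishes at $|x|=1$ and, provided the bracket stays positive on the closed interval, nowhere else on $[-1,1]$. Thus the whole problem is reduced to showing that the bracketed quadratic in $(P_{k+1},P_{k+1}')$ is strictly positive for $-1<x<1$.

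I expect this last step to be the main obstacle. Treating the bracket as a \emph{free} quadratic form in $(P_{k+1},P_{k+1}')$ and bounding it by its discriminant fails: the coefficient structure forces the naive lower bound to become negative as $x\to\pm1$, so the argument must genuinely use the orthogonality of the $P_j$ rather than the three-term recurrence alone. To overcome this, I would establish a sum-of-squares representation of the form $\TT_k(\PP;x) = (1-x^2)\sum_{j=0}^{k} c_{j}\,P_j(x)^2$ with strictly positive coefficients $c_j$ (for example, $k=0,1,2$ yield bracketed parts $\tfrac12 P_0^2$, $\tfrac14(P_0^2+P_1^2)$, and $\tfrac1{72}(11P_0^2+15P_1^2+10P_2^2)$, all with positive coefficients). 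The existence and positivity of such a representation I would derive from the Christoffel--Darboux identity and its confluent form
\[
\sum_{j=0}^{n}\tfrac{2j+1}{2}P_j(x)^2 = \tfrac{n+1}{2}\bigl(P_{n+1}'(x)P_n(x) - P_n'(x)P_{n+1}(x)\bigr),
\]
which encodes exactly the positivity supplied by the weight; equivalently one may invoke the general positivity theory for Tur\'an determinants of orthonormal polynomials, or argue via a Sturm-type analysis showing the bracket has no zero in $(-1,1)$ and hence keeps the sign $+\tfrac12$ it carries at both endpoints. Once $c_j>0$ is secured, strict positivity of the bracket on $(-1,1)$ combines with the endpoint analysis to give $\TT_k(\PP;x)\ge 0$ on $[-1,1]$ with equality precisely at $|x|=1$.
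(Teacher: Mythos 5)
A preliminary remark: the paper offers no proof of this theorem at all --- it is stated as a classical result with the proof delegated to Tur\'an \cite{T} (and to the four further proofs of Szeg\H{o} \cite{Sz}) --- so your attempt can only be measured against those classical arguments, not against anything in the paper. The first half of your proposal is correct and genuinely useful: the two contiguous relations do give $P_k = xP_{k+1} + \frac{(1-x^2)P_{k+1}'}{k+1}$ and $P_{k+2} = xP_{k+1} - \frac{(1-x^2)P_{k+1}'}{k+2}$, and because $\frac{1}{k+1}-\frac{1}{k+2}=\frac{1}{(k+1)(k+2)}$ the substitution really does yield your factorization of $\TT_k(\PP;x)$ with the factor $1-x^2$ pulled out; the bracket does equal $\frac{1}{2}$ at $x=\pm1$, and your $k=0,1,2$ expansions check out. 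This correctly accounts for the equality cases and reduces the theorem to strict positivity of the bracket on $(-1,1)$.

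That reduction, however, is the easy half, and the hard half is not done. You yourself note that the discriminant bound on the quadratic form in $(P_{k+1},P_{k+1}')$ fails for $x^2$ near $1$, and the three repairs you sketch are all placeholders. The confluent Christoffel--Darboux identity you quote concerns the bilinear expression $P_{n+1}'P_n-P_n'P_{n+1}$ in two \emph{consecutive} polynomials, whereas your bracket is a quadratic expression in $P_{k+1}$ and $P_{k+1}'$ alone; you give no computation connecting the two, nor any proof that the expansion of the bracket in the basis $\{P_j^2\}_{j=0}^{k}$ (which exists and is unique, since these span the even polynomials of degree at most $2k$) has positive coefficients --- that positivity \emph{is} the substance of the theorem. ``Invoking the general positivity theory for Tur\'an determinants of orthonormal polynomials'' amounts to citing the result to be proved, which is exactly what the paper already does. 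And a ``Sturm-type analysis showing the bracket has no zero in $(-1,1)$'' is a restatement of the goal, not an argument. So as written the proposal is a correct reduction followed by an unproved claim. To close it you would need to actually produce the positive expansion (for instance by induction on $k$, using the three-term recurrence together with the Legendre differential equation $((1-x^2)P_{k+1}')'=-(k+1)(k+2)P_{k+1}$), or else fall back on one of Szeg\H{o}'s complete arguments, such as the one via the Mehler--Dirichlet integral representation and Cauchy--Schwarz.
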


Tur\'an's paper appeared in print after a publication of G.~Szeg\H{o} \cite{Sz}, who provided four additional proofs of 
Theorem \ref{turan-theorem}.  Citing assistance from G.~P\'olya, Szeg\H{o}  \cite{Sz} establishes similar inequalities for the ultraspherical polynomials, the generalized Laguerre polynomials, 
and the Hermite polynomials.  Inequalities in the form \eqref{TuransLegendre} have been ardently investigated and extended, with contributions from a diverse assemblage of 
mathematicians \cite{CC,DS,KSz,P2,S2,S1,Sz,T}.  

For $P\in\RR[x]$ a fixed polynomial of degree exactly $n$, an important form of the Tur\'an expression \eqref{turan-ineq} is obtained for the sequence 
\[
\{P_{k}(x)\}_{k=0}^\infty=\{D^nP(x),D^{n-1}P(x),\dots,P(x),0,0,\dots\}
\]
 and appears in the {\em Laguerre inequality} \cite{CC}.  The Laguerre inequality and its analogs are useful in determining bounds for extreme zeros of a polynomial sequence \cite{FK2}, and in the study of the location of zeros of entire functions \cite{CC}.  
Tur\'an expressions are also ubiquitous in areas such as matrix theory \cite{CCTP} and statistics \cite{KSz}.

The Tur\'an expression can be represented in terms of a $2\times 2$ Hankel determinant 
\[
\TT_k(\PP;x)=-\begin{vmatrix}
P_{k}(x) & P_{k+1}(x) \\
P_{k+1}(x) & P_{k+2}(x)
\end{vmatrix},
\]
which invites extension to higher dimensional determinants.  In a comprehensive paper, S.~Karlin and G.~Szeg\H{o} \cite{KSz} addressed the properties of the higher dimensional 
determinants in the case that $x$ is real (see also \cite{Leclerc}).
For a special case of the Tur\'an expression, K.~Dilcher and K.~Stolarsky \cite{DS} proved the 
following theorem regarding the location of zeros. 
\begin{theorem}[\cite{DS}]\label{ds-wronskian}
If all the zeros of $P(z)$ are real and lie in the interval $[-1,1]$, then all the zeros of the Wronskian $W_P(x)=(P'(x))^2 - P''(x)P(x)$ lie inside or on the unit circle.
\end{theorem}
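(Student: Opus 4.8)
The plan is to reduce the statement to an elementary analysis of the logarithmic derivative of $P$. Writing $P(x) = a\prod_{j}(x - s_j)^{m_j}$ with distinct real zeros $s_j \in [-1,1]$ and multiplicities $m_j \ge 1$, I would first record the identity
\[
W_P(x) = -P(x)^2\left(\frac{P'(x)}{P(x)}\right)' = P(x)^2\sum_j \frac{m_j}{(x - s_j)^2},
\]
obtained from $P'/P = \sum_j m_j/(x-s_j)$; this already shows that $W_P$ is a polynomial of degree $2\deg P - 2$. The factorization separates the zeros of $W_P$ into those occurring at zeros of $P$ and those where $g(x) := \sum_j m_j/(x-s_j)^2$ vanishes.

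For the first kind, I would note that at $x = s_\ell$ only the $\ell$-th summand of $P(x)^2 g(x)$ survives, and it is non-zero exactly when $m_\ell = 1$. Hence the only zeros of $W_P$ lying among the zeros of $P$ are the multiple zeros $s_\ell$, each of which sits in $[-1,1]$ and so in the closed unit disk. For the second kind, I would take a non-real zero $x = u + iv$ (so $v \neq 0$), set $w_j := m_j/|x - s_j|^4 > 0$, and split $g(x) = 0$ into its real and imaginary parts via $1/(x-s_j)^2 = \overline{(x-s_j)}^{\,2}/|x-s_j|^4$. After dividing the imaginary part by $-2v$, this produces
\[
\sum_j w_j(u - s_j) = 0 \qquad\text{and}\qquad \sum_j w_j\bigl((u-s_j)^2 - v^2\bigr) = 0.
\]
Interpreting these through the weighted mean $\langle f\rangle := \bigl(\sum_j w_j f_j\bigr)/\bigl(\sum_j w_j\bigr)$, the first says $u = \langle s\rangle$ is a convex combination of the $s_j$, so $u \in [-1,1]$, while the second says $v^2 = \langle (u-s)^2\rangle$ is the corresponding weighted variance. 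The decisive step — the one I would be careful to get right — is then the identity
\[
u^2 + v^2 = \langle s\rangle^2 + \bigl(\langle s^2\rangle - \langle s\rangle^2\bigr) = \langle s^2\rangle \le 1,
\]
where the inequality is immediate from $s_j^2 \le 1$. This forces $|x| \le 1$ for every such zero.

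I do not expect a deep obstacle: once the partial-fraction factorization is available, the heart of the matter is the one-line weighted-variance computation above. The care needed is mostly bookkeeping — confirming that the two cases account for all $2\deg P - 2$ zeros, handling the real-zero case separately (if $v = 0$ the real-part equation forces every $u = s_j$, which is impossible, so $g$ has no real zeros and the only real zeros of $W_P$ are the multiple zeros of $P$), and tracking multiplicities so that the logarithmic-derivative identity and the survival argument at $x = s_\ell$ are valid. The boundary case $|x| = 1$ then corresponds precisely to all active zeros $s_j$ equalling $\pm 1$.
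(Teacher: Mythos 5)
Your proof is correct. Note, however, that the paper does not prove this statement at all: it is quoted from Dilcher and Stolarsky \cite{DS}, and the authors only remark that their own logarithmic-derivative technique (the one used for Theorem \ref{main-theorem}) recovers a strictly weaker conclusion, namely that the zeros of $W_P$ lie in the vertical strip $-1\le \Re\, x\le 1$. Your argument starts from the same identity $W_P=-P^2\,(P'/P)'=P^2\sum_j m_j/(x-s_j)^2$ that underlies that remark, but the decisive extra step --- writing $1/(x-s_j)^2=\overline{(x-s_j)}^{\,2}/|x-s_j|^4$, reading off $u=\langle s\rangle$ and $v^2=\langle s^2\rangle-\langle s\rangle^2$ from the two real equations, and concluding $|x|^2=\langle s^2\rangle\le 1$ --- is exactly what upgrades the strip to the closed unit disk, and it is essentially the mechanism of the original \cite{DS} proof. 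The bookkeeping you flag does check out: $W_P=a^2\prod_j(x-s_j)^{2m_j-2}N(x)$ with $N(x)=\sum_j m_j\prod_{i\ne j}(x-s_i)^2$, the degrees $\sum_j(2m_j-2)$ and $2(d-1)$ sum to $2\deg P-2$, $N$ does not vanish at any $s_j$, and $g$ has no real zeros (being a sum of positive terms on $\RR\setminus\{s_j\}$), so every zero of $W_P$ is either a multiple zero of $P$ in $[-1,1]$ or a zero of $g$ in the closed disk. In short: a complete and correct proof, strictly stronger than what the paper's own method delivers for this statement.
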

\noindent
Dilcher and Stolarsky take their analysis well beyond Theorem \ref{ds-wronskian}, and extend their results to generalized Laguerre expressions (see \cite{DS2}, Theorem \ref{ds-extended-theorem}).  
One such generalization gives rise to the \emph{extended Laguerre inequalities}, which have importance in the theory of entire functions.  In particular, these inequalities provide a necessary 
and sufficient condition for the Riemann-Hypothesis \cite{CV}.  We investigate extended Tur\'an expressions, and show that the extended Tur\'an expressions for the Chebyshev polynomials of the 
first and second kind have zeros only at $\pm 1$. 

\medskip
We note that our method of proof for Theorem \ref{main-theorem} yields a less precise version of Theorem \ref{ds-wronskian}, that the zeros of $W_P$ must lie in the vertical 
strip $-1\le \Re~x \le 1$.  The proof of Theorem \ref{main-theorem} uses standard methods involving the logarithmic derivative which are common in the analytic theory of polynomials \cite{RS}, and 
is similar to the proof of Theorem \ref{ds-wronskian}.  Results related to log-concavity have recently been obtained by D.~Karp \cite{Karp}, who also conjectures Hurwitz stability for polynomials which arise from determinants. 

The rest of the paper is organized as follows. An example in Section \ref{prelim} shows that the drafted proof of Problem \ref{fisk-conjecture} contains a mistake (see \cite[p.~651,~Lemma~21.91]{Fisk}), and we note an observation 
(Lemma \ref{arg-arg}) 
for later use.  
In Section \ref{section-turan}, we establish the weak Hurwitz stability of the Tur\'an expressions for several forms of polynomial sequences (Propositions \ref{lem:main}, \ref{hermite-type}, 
and \ref{polar-type}), which proves Theorem \ref{main-theorem}.  We also obtain an analogous result for the Hermite polynomials (Proposition \ref{hermite-turan}).  
An \textit{extended Tur\'an expression} is introduced in Section \ref{high-turan}, along with some supporting results for the Chebyshev polynomials 
(Propositions \ref{prop-high-cheby} and \ref{prop-high-cheby2}), and some conjectures regarding the Laguerre and single variable Bell polynomials (Conjectures \ref{bell-conjecture} and 
\ref{laguerre-conjecture}).

\section{Preliminary Observations}
\label{prelim}

We begin by recalling two results needed for our investigation.

\begin{proposition}[{cf. \cite[p. 366, Proposition 11.4.2]{RS}}]\label{weak-stable-necessary}
If $f\in\RR[x]$ is weakly Hurwitz stable, then the coefficients of $f$ are non-negative. 
\end{proposition}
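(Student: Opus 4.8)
The plan is to argue directly from the real factorization of $f$ that its zero set forces. Weak Hurwitz stability means $f$ has no zeros with $\Re x>0$, so every zero $z$ of $f$ satisfies $\Re z\le 0$. Since $f\in\RR[x]$, its non-real zeros occur in complex-conjugate pairs, and I would therefore write
\[
f(x) = A \prod_{j}(x - r_j)\prod_{\ell}\bigl(x^2 - 2\alpha_\ell x + (\alpha_\ell^2 + \beta_\ell^2)\bigr),
\]
where $A$ is the leading coefficient, the $r_j\le 0$ are the real zeros, and each quadratic factor collects a conjugate pair $\alpha_\ell\pm i\beta_\ell$ with $\beta_\ell\neq 0$ and $\alpha_\ell\le 0$.

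First I would verify that each factor has non-negative coefficients. A linear factor $x - r_j = x + |r_j|$ has non-negative coefficients because $r_j\le 0$. A quadratic factor has coefficients $1$, $-2\alpha_\ell\ge 0$, and $\alpha_\ell^2+\beta_\ell^2>0$, all non-negative precisely because $\alpha_\ell\le 0$. Next I would invoke the elementary closure fact that polynomials with non-negative coefficients form a multiplicatively closed set: each coefficient of a product is a sum of products of coefficients of the factors, hence non-negative whenever the factors' coefficients are. It follows that the monic product of the displayed linear and quadratic factors has non-negative coefficients, and taking the leading coefficient $A>0$ (the standard normalization; otherwise $-f$, which is likewise weakly Hurwitz stable, carries the non-negative coefficients) yields the claim for $f$.

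The argument is essentially bookkeeping once the factorization is written down, so I do not anticipate a serious obstacle; the only point needing care is the behavior on the boundary $\Re z=0$ permitted by \emph{weak} (rather than strict) stability. A zero at the origin contributes a factor $x$, and a purely imaginary conjugate pair $\pm i\beta$ contributes $x^2+\beta^2$, both of which still have non-negative coefficients, so the closed half-plane causes no difficulty. I would also flag the sign convention explicitly, since $-f$ is weakly Hurwitz stable exactly when $f$ is, making the positivity of the leading coefficient an implicit hypothesis of the statement.
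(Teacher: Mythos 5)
Your proof is correct and follows exactly the paper's (much terser) argument: factor $f$ into its irreducible real linear and quadratic factors and observe that each has non-negative coefficients because all zeros lie in $\Re z \le 0$. Your explicit attention to the sign of the leading coefficient and to boundary zeros on $\Re z = 0$ is a reasonable elaboration of details the paper leaves implicit.
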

 
\begin{proof}
This follows immediately from the non-negativity of the coefficients of the irreducible linear and quadratic factors of $f$.
\end{proof}

For a complex polynomial $f(x)=\sum_{k=0}^n (a_k+ib_k)x^k$, where $a_k,b_k\in\RR$ for $k=0,\dots,n$, let the real and imaginary parts of $f$ be given by $\Re f(x)=\sum_{k=0}^n a_kx^k$, and $\Im f(x)=\sum_{k=0}^n b_kx^k$, respectively.
Given a sequence of real numbers $\{r_1,\dots,r_n\}$, let $V(r_1,\dots,r_n)$ be the number of sign changes of the sequence, where $0$ values are omitted (e.g. $V(+,0,-,+,0)=V(+,-,+)=2$).  
Given two polynomials $f_0$ and $f_1$, with $\deg(f_1) <\deg(f_0)$, we generate a sequence by the standard Euclidean algorithm with the following recursion: given $\deg(f_{k-1})>\deg(f_k)$, determine the 
quotient $q_k$ and remainder $r_k$ such that $f_{k-1}=q_kf_{k}+r_k$ and $\deg(r_k)<\deg(f_k)$, then set $f_{k+1}=-r_k$, and terminate when $f_{k+1}$ is the greatest common divisor of $f_0$ and $f_1$.

\begin{theorem}[{\cite[p. 364, Proposition 11.3.2]{RS}}]\label{sturm-hurwitz-criterion}
Let $f$ be a monic polynomial of degree $n$.  Denote by $f_0,f_1,\dots,f_m$ the sequence of polynomials generated by the Euclidean algorithm, started with $f_0(x)= \Re f(x)$ and $f_1(x)=\Im f(x)$, 
and set $m=0$ if $f_1\equiv 0$. If $f$ has no real zeros, then the number $p$ of zeros of $f$ in the open upper half-plane is given by
\[ 
p = \frac{n}{2} + \lim_{R\to\infty} \left[ V(f_0(R),\dots,f_m(R)) - V(f_0(-R),\dots,f_m(-R)) \right]
\]
\end{theorem}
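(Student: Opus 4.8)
The plan is to prove this via the argument principle, splitting the count of upper half-plane zeros into a contribution from a large semicircular arc and a contribution from the real axis, and then to evaluate the latter through the Cauchy index of $\Im f/\Re f$, which the Euclidean (Sturm) chain $f_0,\dots,f_m$ computes as a difference of sign variations. Since $f$ is monic of degree $n$ and has no real zeros, I would fix $R$ larger than the modulus of every zero and let $\Gamma_R$ be the positively oriented contour consisting of the segment $[-R,R]$ followed by the semicircle $\{Re^{i\theta}:0\le\theta\le\pi\}$. By the argument principle the number of zeros enclosed by $\Gamma_R$ equals $\tfrac{1}{2\pi}\Delta_{\Gamma_R}\arg f$, where $\Delta$ denotes the net change of a continuous branch of $\arg f$; for $R$ large this enclosed count is exactly $p$, the number of zeros with $\Im x>0$.

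First I would handle the arc. On $\{Re^{i\theta}\}$ we have $f(Re^{i\theta})=R^{n}e^{in\theta}(1+o(1))$ as $R\to\infty$, so as $\theta$ runs from $0$ to $\pi$ the argument increases by $n\pi+o(1)$. Hence the arc contributes $\tfrac{1}{2\pi}(n\pi)=\tfrac{n}{2}$ in the limit, which accounts for the leading $n/2$ term.

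Next comes the real-axis segment, which is the substantive part. Along $x\in[-R,R]$ one has $f(x)=f_0(x)+if_1(x)$ and $\tfrac{d}{dx}\arg f(x)=\Im\bigl(f'(x)/f(x)\bigr)$, so $\Delta_{[-R,R]}\arg f$ is governed by how the planar curve $x\mapsto\bigl(f_0(x),f_1(x)\bigr)$ winds about the origin. Its half-turns occur precisely as it crosses the imaginary axis, i.e. at the real zeros of $f_0$, where the rational function $f_1/f_0$ has poles; each transversal crossing changes $\arg f$ by $\pm\pi$ according to the direction in which $f_1/f_0$ jumps through $\infty$. Because $f$ is monic we have $\deg f_1<\deg f_0=n$, so $f_1/f_0\to0$ at both endpoints as $R\to\infty$; the curve therefore begins and ends on the real axis and there is no leftover contribution "at infinity." Consequently $\tfrac{1}{2\pi}\Delta_{[-R,R]}\arg f$ converges, up to the sign and normalization fixed by the orientation above, to the Cauchy index of $f_1/f_0$ over $\RR$.

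Finally I would invoke the rational (generalized) Sturm theorem: the sequence $f_0,f_1,\dots,f_m$ produced by the Euclidean algorithm with negated remainders is a Sturm-type chain for the pair $(f_0,f_1)$, and the Cauchy index of $f_1/f_0$ on an interval equals the difference of the sign-variation counts $V$ of this chain at the endpoints. Letting the endpoints tend to $\pm\infty$ identifies the real-axis contribution with $\lim_{R\to\infty}\bigl[V(f_0(R),\dots,f_m(R))-V(f_0(-R),\dots,f_m(-R))\bigr]$, and adding the arc's $n/2$ yields the stated identity; the degenerate case $f_1\equiv0$ forces $n$ even, $p=n/2$, and a bracket equal to $0$, which is consistent. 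The main obstacle is the bookkeeping in this last identification: matching orientation conventions so the signs of the Cauchy index line up with the ordering used for $V$, fixing the correct normalization so that the arc and segment combine into the clean coefficient on the bracket, justifying the passage to $\pm\infty$ where several $f_j$ may switch sign patterns at once, and supplying (or citing) the proof that the negated-remainder chain genuinely computes the Cauchy index.
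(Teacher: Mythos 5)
The paper itself offers no proof of this statement---it is quoted directly from \cite[p.~364, Proposition~11.3.2]{RS}---so there is no internal argument to compare against. Your route (argument principle on a large semicircular contour, the arc contributing $n/2$, the real-axis contribution expressed as a Cauchy index, and the negated-remainder Euclidean chain evaluating that index as a difference of sign variations) is the standard proof of this classical criterion, and each ingredient you name is sound, including the treatment of the degenerate case $f_1\equiv 0$ and the observation that $\deg f_1<\deg f_0=n$ kills any contribution at infinity.

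The gap is exactly the item you defer as ``bookkeeping,'' and it is not cosmetic. As you yourself note, each transversal crossing of a real zero of $f_0$ changes $\arg f$ by $\pm\pi$, i.e.\ by a \emph{half}-turn; hence $\frac{1}{2\pi}\Delta_{[-R,R]}\arg f$ tends to $-\tfrac12 I_{-\infty}^{+\infty}(f_1/f_0)$, not to the Cauchy index itself. Feeding this into Sturm's theorem $I_a^b(f_1/f_0)=V(a)-V(b)$ and adding the arc term, your computation actually yields
\[
p=\frac n2+\frac12\lim_{R\to\infty}\bigl[V(f_0(R),\dots,f_m(R))-V(f_0(-R),\dots,f_m(-R))\bigr],
\]
with a factor $\tfrac12$ on the bracket that the displayed statement lacks. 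So the final assertion that ``adding the arc's $n/2$ yields the stated identity'' cannot be completed as written: for $f(x)=x-i$ one has $p=1$, the chain is $(x,-1)$, the bracket equals $1-0=1$, and the displayed formula returns $3/2$, while the version with the factor $\tfrac12$ returns $1$. (Similarly $f(x)=x^2-3ix-2$ gives $p=2$ but a bracket of $2$.) The normalization you postpone is therefore where the theorem's constant lives: carried out honestly, your argument proves the (correct) identity $p=\tfrac12\bigl(n-I_{-\infty}^{+\infty}(f_1/f_0)\bigr)$ rather than the formula as transcribed, and you should either carry the $\tfrac12$ through explicitly or note that the quoted statement needs it.
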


\begin{remark}\label{zero-count}
Let $g$ denote the greatest common divisor of $f_0\in\RR[x]$ and $f_1\in\RR[x]$ associated to a monic polynomial $f=f_0+if_1$. Since $g$ is the factor of $f$ containing all its real zeros, 
applying Theorem \ref{sturm-hurwitz-criterion} to $f/g$ counts the number of zeros $p$ of $f$ in the open upper half-plane. Then $m=p+ \deg(g)$ is the number of zeros of $f$ in the \emph{closed} 
upper half-plane.  
\end{remark}

Theorem \ref{sturm-hurwitz-criterion} and Remark \ref{zero-count} permit determination of the weak Hurwitz stability of any $f\in\RR[x]$ with degree $n$ by verifying that all zeros of $(-i)^nf(ix)$ 
lie in the closed upper half-plane.  A frequently cited, equivalent result is the Routh--Hurwitz criterion \cite[p. 369, Proposition 11.4.5]{RS}, but we found it was slower for our computations.  

The proof of Theorem \ref{main-theorem} involves the following elementary observation, which was also noted by Fisk \cite[p. 627, Lemma 21.29]{Fisk}. Let $\arg:\CC\setminus\{0\}\to(-\pi,\pi]$ denote the conventional argument function.  

\begin{lemma}\label{arg-arg}
If $a,b,c\in\RR$, $c>0$, $x\in\CC$, and $\arg(x-a)\in (0,\pi/2)$, $\arg(x-b)\in [0,\pi/2)$, then
\renewcommand\theenumi{\roman{enumi}}
\begin{enumerate}
\item $\Im\left[\frac{c}{x-a}\right] < 0$, and
\item $\Im\left[\frac{c}{(x-a)(x-b)}\right] < 0$.
\end{enumerate}
\end{lemma}

\begin{proof}

Follows immediately: 
\[\arg(x-a)\in (0,\pi/2) \Rightarrow \arg(1/(x-a))\in(-\pi/2, 0) \Rightarrow (i)\, ,\] and  \[\arg(x-b)\in [0,\pi/2) \Rightarrow \arg(1/(x-b))\in(-\pi/2, 0]\, ,\] therefore 
\[\arg(1/(x-a)(x-b))\in(-\pi/0) \Rightarrow (ii).\]  
\end{proof}
 
The proof of Problem \ref{fisk-conjecture} that appears in Fisk's electronic draft \cite[p.~651, Lemma 21.91]{Fisk} relies on a claim that for a polynomial $f$ with only real negative 
zeros, the polynomial 
\begin{equation}\label{not-a-cone}
af^2+bff'+cx(ff''-(f')^2)
\end{equation}
is (weakly) Hurwitz stable for all $a,b,c>0$ \cite[p.~643, Lemma 21.67]{Fisk}.  The following example shows that this claim is false; in the draft, it is mistakenly assumed that the terms of the 
sum \eqref{not-a-cone} are part of a cone of ``half-plane interlacings'', although if $c$ were a negative number this would hold.

\begin{example}\label{coneless}
Let $f=x(x+2)$, $a=b=1$, and $c=3$.  The polynomial obtained in \eqref{not-a-cone} is then $-8x - 2 x^2 + x^4$, which is not weakly Hurwitz stable by Proposition \ref{weak-stable-necessary}.
\end{example}

\section{Stable Tur\'an  Expressions}\label{section-turan}

In this section we establish the stability of the Tur\'an expression for sequences of polynomials which satisfy first order recurrences of a special form 
(Proposition \ref{lem:main}, \ref{hermite-type}, and \ref{polar-type}), which proves Theorem \ref{main-theorem}.  

The zero sets of two polynomials $f,g\in\RR[x]$ are said to \emph{strictly interlace}, 
if $f$ and $g$ have only real zeros, and if between any two consecutive zeros of $f$ there is exactly one zero of $g$ and \textit{vice versa}.  
It is known that if the zeros of $f$ and $g$ are non-positive and interlace, then the difference in logarithmic derivatives $f'/f-g'/g$ is non-zero on the positive real axis.  
We use a difference of logarithmic derivatives in our proofs as well, but weak Hurwitz stability of the Tur\'an expression does not follow from the interlacing of the zeros alone 
(consider $f_0=1$, $f_1=x-1$, $f_2=x^2-3x/2$).

\begin{proposition}\label{lem:main}
Let $\PP=\{P_k\}_{k=0}^\infty$ be a set of polynomials where $\deg(P_k)=k$, 
\begin{equation} \label{rec1}
P_{k+1}(x) = a(x+b)(D+c_{k})P_{k}(x),
\end{equation}
$a \in \RR \setminus \{0\}$, $b\ge0$, and $c_{k+1}\ge c_{k}>0$ for all $k\in\NN$. 
Then $\TT_k(\PP;x)$ is non-zero for $\Re~x>-b$, $k\in\NN$.
\end{proposition}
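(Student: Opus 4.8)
The plan is to pass to logarithmic derivatives. Set $Q_k=(D+c_k)P_k$, so that the recurrence reads $P_{k+1}=a(x+b)Q_k$, and let $g_k=P_k'/P_k+c_k=Q_k/P_k$. Expanding the Tur\'an expression with the recurrence and collecting the Wronskian-type term $P_{k+1}P_k'-P_{k+1}'P_k=P_{k+1}P_k\,(P_k'/P_k-P_{k+1}'/P_{k+1})$ yields the identity
\[
\TT_k(\PP;x)=a(x+b)\,P_{k+1}(x)\,P_k(x)\,\bigl(g_k(x)-g_{k+1}(x)\bigr),
\]
valid as an identity of rational functions wherever $P_kP_{k+1}\neq0$. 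Since $a(x+b)\neq0$ when $\Re~x>-b$, and (as established below) $P_k,P_{k+1}$ have no zeros there, it remains to prove $g_k\neq g_{k+1}$ on $\{\Re~x>-b\}$.

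The structural input, which I would establish first by induction on $k$, is the location of the zeros. Writing $Q_k=e^{-c_kx}(e^{c_kx}P_k)'$ and applying Rolle's theorem shows that the $k$ zeros $\gamma_1<\cdots<\gamma_k$ of $Q_k$ are real and simple and strictly interlace the zeros $\alpha_1<\cdots<\alpha_k$ of $P_k$, with the extra zero on the left: $\gamma_1<\alpha_1<\gamma_2<\cdots<\gamma_k<\alpha_k$. Moreover $Q_k/P_k=\sum_i 1/(x-\alpha_i)+c_k>0$ for $x>\alpha_k$, so $Q_k$ has no zero to the right of $\alpha_k$, giving $\gamma_k<\alpha_k$; feeding this through $P_{k+1}=a(x+b)Q_k$ shows by induction that for every $k\ge1$ the polynomial $P_k$ has only real simple zeros, all lying in $(-\infty,-b]$, with largest zero exactly $-b$. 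In particular $\{\Re~x>-b\}$ is free of zeros of every $P_k$. Because $-b$ is then a common (simple) zero of $P_k$ and $P_{k+1}$, the singular term $1/(x+b)$ in the two logarithmic derivatives cancels, leaving
\[
g_k(x)-g_{k+1}(x)=(c_k-c_{k+1})+\sum_{i=1}^{k-1}\frac{1}{x-\alpha_i}-\sum_{j=1}^{k}\frac{1}{x-\gamma_j}.
\]

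With this the two remaining cases are short. For real $x>-b$, all $\alpha_i,\gamma_j$ lie to the left of $x$; pairing $\alpha_i$ with $\gamma_{i+1}$ (using $\alpha_i<\gamma_{i+1}<x$) makes each paired difference negative, the leftover term $-1/(x-\gamma_1)$ is negative, and $c_k-c_{k+1}\le0$, so $g_k-g_{k+1}<0$ and $\TT_k\neq0$. For non-real $x=u+iv$ with $u>-b$ we may assume $v>0$ by conjugate symmetry; if $\TT_k(x)=0$ then $g_k(x)=g_{k+1}(x)$, and taking imaginary parts (the real constants drop out) and dividing by $-v$ gives
\[
\sum_{i=1}^{k-1}\frac{1}{|x-\alpha_i|^2}=\sum_{j=1}^{k}\frac{1}{|x-\gamma_j|^2}.
\]
Since $\alpha_i,\gamma_j<-b<u$, the interlacing inequality $\alpha_i<\gamma_{i+1}<u$ forces $|x-\gamma_{i+1}|<|x-\alpha_i|$, so the right-hand sum already dominates the left term-by-term and carries the additional positive summand $1/|x-\gamma_1|^2$, a contradiction. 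Hence $\TT_k$ is non-zero throughout $\Re~x>-b$.

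The main obstacle, and the step I would treat most carefully, is the inductive zero-structure claim---real simplicity, the interlacing with the extra zero on the left, and the fact that $-b$ is precisely the largest zero---since it is exactly the common zero at $-b$ (which produces the cancellation) together with all zeros lying to the left of the half-plane that drive both the sign computation and the term-by-term domination. The degenerate base cases $k=0$ and $k=1$, where one of the two sums is empty, should be verified directly; there the imaginary part of $g_k-g_{k+1}$ is already seen to be non-zero for $v\neq0$.
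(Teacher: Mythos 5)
Your proof is correct and follows essentially the same route as the paper's: the same inductive zero-location argument via Rolle's theorem applied to $e^{c_kx}P_k$ (yielding simple real zeros, strict interlacing, and the common largest zero at $-b$), the same reduction of $\TT_k$ to the non-vanishing of a difference of shifted logarithmic derivatives, and the same pairing of zeros via interlacing to control the sign of the resulting sum. The only cosmetic differences are that you write the factorization $\TT_k=a(x+b)P_{k+1}P_k\,(g_k-g_{k+1})$ explicitly rather than as a ratio, and you compute the imaginary parts via moduli instead of invoking the paper's Lemma \ref{arg-arg}.
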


\begin{proof}
We may assume that $P_0(x)=1$, $a=1$, by normalizing the sequence, and in this case, 
$P_1(x)=c_0(x+b)$ and $P_2(x)=c_0c_1(x+b)(x+b+1/c_1)$.  
Recall from Remark \ref{rem-real-roots} that the form of the recurrence ensures all the roots of $P_k(x)$ are real for each $k\in\NN$. 
Let $\{r_j\}_{j=1}^k$, $\{s_j\}_{j=1}^{k-1}$ be the zeros of $P_k$ and $P_{k-1}$ respectively, listed in decreasing order.  
We first show that $r_k<s_{k-1}<r_{k-1}<\cdots<s_1=r_1=-b \leq 0$. 

We prove the claim by induction.  Assume that $P_{k-1}(x)=(x+b)Q_{k-1}(x)$, where $Q_{k-1}(x)$ is a degree $k-2$ polynomial 
with only simple real zeros, strictly less than $-b$.  Now, let $c_{k-1}>0$.  From \eqref{rec1},

\[Q_{k}(x)=(D+c_{k-1})(x+b)Q_{k-1}(x)=e^{-c_{k-1} x}D\left(e^{c_{k-1} x}(x+b)Q_{k-1}(x)\right).\]
By an application of Rolle's theorem, $Q_k(x)$ has only real simple zeros strictly less than $-b$, which strictly interlace those of $Q_{k-1}$.  
(Note the argument just given is based on a proof of the Hermite--Poulain Theorem \cite[p. 155]{RS}).

Observe that whenever $x$ is not a zero of $P_k(x)$, 
\[
\frac{P_{k+1}(x)}{P_k(x)} = a(x+b)\left(c_k + \frac{P'_k(x)}{P_k(x)}\right).
\]
Substituting this into the expression for $\TT_{k-1}$ yields (for all $x$ with $\Re~x> - b$)
\[
\TT_{k-1}(\PP;x) = (P_k(x))^2 \left(1 - \frac{c_{k} + P'_{k}(x)/P_k(x)}{c_{k-1} + P'_{k-1}(x)/P_{k-1}(x)}\right).
\]
The expression $\TT_{k-1}(\PP;x)$ is zero at $x=x_0$ if and only if 
\[
c_k + P'_{k}(x_0)/P_k(x_0)=c_{k-1} + P'_{k-1}(x_0)/P_{k-1}(x_0),
\]
or $P_k(x_0)=P_{k+1}(x_0)=0$, or $P_k(x_0)=P_{k-1}(x_0)=0$ --- by construction, the last two conditions occur only if $x_0=-b$.  To prove the proposition, we will show that for $\Re~x_0> -b$,
\begin{equation}\label{log-derivatives}
P'_{k}(x_0)/P_k(x_0)-P'_{k-1}(x_0)/P_{k-1}(x_0) +(c_k-c_{k-1})\neq 0.
\end{equation}
The first two terms of the left-hand side of \eqref{log-derivatives} can be written
\begin{align}
 P'_{k}(x_0)/P_k(x_0)-P'_{k-1}(x_0)/P_{k-1}(x_0) &= \sum_{j=1}^k \frac{1}{x_0-r_j} - \sum_{j=1}^{k-1} \frac{1}{x_0-s_j} \nonumber\\
&= \frac{1}{x_0-r_k} + \sum_{j=2}^{k-1}\frac{r_j-s_j}{(x_0-r_j)(x_0-s_j)},\label{log-sum}
\end{align}
where $r_j-s_{j}>0$, for $j=2,3,\ldots,k-1.$  

Suppose that  $\Re~x_0> -b$.  If $x_0$ is on the real axis then all of the terms in \eqref{log-sum} are positive and \eqref{log-derivatives} holds.  Let $\Im~x_0> 0$.  
By Lemma \ref{arg-arg}, all terms in \eqref{log-sum} have negative imaginary part.
Thus the imaginary part of \eqref{log-sum} must be negative when $\Im~x_0> 0$, and \eqref{log-derivatives} holds again.  The case $\Im~x_0< 0$ follows by symmetry.
\end{proof}

Proposition~\ref{appell-type} implies the Laguerre expression is non-zero outside the strip containing the zeros of the polynomial, although in this case a more precise result has already been 
proved by Dilcher and Stolarsky \cite{DS2,DS}.  

\begin{proposition}[{Corollary of \cite[Theorem 2.5]{DS}}]\label{appell-type}
Let $\PP=\{P_k\}_{k=0}^\infty$ be a set of polynomials where $P_k(x) = P'_{k+1}(x)$, and $P_k$ has only real non-positive zeros for $k=0,\dots,N.$ Then $\TT_k(\PP;x)$ is weakly Hurwitz 
stable for each $k=0,\dots,N.$
\end{proposition}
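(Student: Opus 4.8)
The plan is to reduce the Tur\'an expression for this ``primitive'' (Appell-type) sequence to the Wronskian-type expression of a single polynomial, and then to run the logarithmic-derivative argument of Proposition \ref{lem:main}. First I would use the relation $P_k=P_{k+1}'$ twice: from $P_{k+1}=P_{k+2}'$ and $P_k=P_{k+1}'=P_{k+2}''$ we get
\[
\TT_k(\PP;x) = (P_{k+1}(x))^2 - P_{k+2}(x)P_k(x) = (P_{k+2}'(x))^2 - P_{k+2}(x)P_{k+2}''(x),
\]
so $\TT_k(\PP;x)=W_{Q}(x)$ with $Q:=P_{k+2}$, exactly the Wronskian appearing in Theorem \ref{ds-wronskian}. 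By hypothesis $Q$ has only real, non-positive zeros $\alpha_1,\dots,\alpha_d$, listed with multiplicity, where $d=\deg Q$.

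The key step is a partial-fraction identity. Since $Q'/Q=\sum_{j}1/(x-\alpha_j)$, differentiating gives $(Q'/Q)'=-\sum_j 1/(x-\alpha_j)^2$, while directly $(Q'/Q)'=(QQ''-(Q')^2)/Q^2=-W_Q/Q^2$. Equating the two expressions yields, as an identity of polynomials,
\[
W_Q(x) = Q(x)^2 \sum_{j=1}^{d} \frac{1}{(x-\alpha_j)^2}.
\]
Because every $\alpha_j\le 0$, the factor $Q(x)^2$ does not vanish when $\Re~x>0$, so on the open right half-plane the zeros of $\TT_k(\PP;x)$ coincide with those of $\sum_j (x-\alpha_j)^{-2}$.

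It then remains to show this sum is non-zero for $\Re~x>0$. If $x$ is real and positive, each $x-\alpha_j>0$, so every summand is positive and the sum is positive. If $\Im~x>0$, then for each $j$ we have $\Re(x-\alpha_j)=\Re~x-\alpha_j>0$ and $\Im(x-\alpha_j)=\Im~x>0$, hence $\arg(x-\alpha_j)\in(0,\pi/2)$; applying Lemma \ref{arg-arg}(ii) with $a=b=\alpha_j$ gives $\Im[(x-\alpha_j)^{-2}]<0$ for every $j$. Summing, the imaginary part of $\sum_j (x-\alpha_j)^{-2}$ is strictly negative, so the sum cannot vanish. The case $\Im~x<0$ follows by conjugation symmetry, since $Q\in\RR[x]$ forces $W_Q\in\RR[x]$. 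This establishes that $\TT_k(\PP;x)=W_Q(x)$ is non-zero on $\Re~x>0$, that is, weakly Hurwitz stable.

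I expect no serious obstacle once the reduction to $W_{P_{k+2}}$ is in place: the remaining argument is essentially identical to that of Proposition \ref{lem:main}, with the repeated factor $(x-\alpha_j)^2$ playing the role of the individual terms there. The only points requiring care are the validity of the partial-fraction identity in the presence of multiple zeros of $Q$ (handled uniformly by the computation above, and in any case irrelevant on $\Re~x>0$, where $Q$ has no zeros) and the bookkeeping of indices guaranteeing that the polynomial $P_{k+2}$ occurring in $\TT_k$ has only real non-positive zeros over the stated range. I would also remark that the result is a special case of Dilcher--Stolarsky \cite[Theorem 2.5]{DS}, which gives the sharper conclusion that all zeros of $W_Q$ satisfy $\Re~x\le 0$.
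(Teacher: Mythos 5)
Your proposal is correct, and it reaches the conclusion by a genuinely different decomposition than the paper. The paper's proof sketch uses the relation $P_k=P_{k+1}'$ to rewrite the Tur\'an expression as $(P_k(x))^2\bigl(1-\tfrac{P'_k(x)/P_k(x)}{P'_{k+1}(x)/P_{k+1}(x)}\bigr)$, i.e.\ as a statement about the difference of logarithmic derivatives of two \emph{consecutive} polynomials; it then invokes Rolle's theorem to get strict interlacing of the zeros of $P_{k+1}$ and $P_{k+1}'=P_k$, pairs the zeros as in Proposition \ref{lem:main} to write the difference as $\frac{1}{x_0-r_k}+\sum_j\frac{r_j-s_j}{(x_0-r_j)(x_0-s_j)}$, applies Lemma \ref{arg-arg}, and finally disposes of multiple zeros by a limiting argument. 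You instead apply the relation twice to identify $\TT_k(\PP;x)$ with the Wronskian $W_Q$ of the single polynomial $Q=P_{k+2}$, and use the identity $W_Q=-Q^2\,(Q'/Q)'=Q^2\sum_j(x-\alpha_j)^{-2}$, after which Lemma \ref{arg-arg}(ii) with $a=b=\alpha_j$ finishes the argument. What your route buys is economy: no interlacing or Rolle step is needed, and multiple zeros are handled automatically by the partial-fraction identity rather than by passage to the limit, so your ``sketch'' is actually a complete proof. The two index caveats you flag are real but harmless: they are present in the paper's version as well (the stability of $\TT_k$ genuinely requires control of the zeros of $P_{k+2}$, so the range of valid $k$ should be read accordingly), and your closing remark correctly places the result as a weaker form of Theorem \ref{ds-wronskian}, which is exactly how the paper frames it.
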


\begin{proof}[Proof (Sketch)]
Note that for $\Re~x_0>0$,
\[
\frac{P_{k-1}(x_0)}{P_{k}(x_0)} =  \frac{P'_k(x_0)}{P_{k}(x_0)},
\]
and
\[
T(\PP;x_0) = (P_k(x_0))^2 \left(1 - \frac{P'_{k}(x_0)/P_k(x_0)}{P'_{k+1}(x_0)/P_{k+1}(x_0)}\right).
\]
Similar to Proposition \ref{lem:main} we need to show that 
\begin{equation}\label{apeq} 
P'_{k}(x_0)/P_k(x_0)-P'_{k-1}(x_0)/P_{k-1}(x_0)\neq 0.
\end{equation}
Assume first that $P_k(x)$ has only simple real zeros $\{r_j\}_{j=1}^k$.  Then the zeros $\{s_j\}_{j=1}^{k-1}$ of $P'_k(x)=P_{k-1}(x)$ are simple and interlace those of $P_k(x)$ ($r_k<s_{k-1}<\cdots<s_1<r_1$). 
The proof that \eqref{apeq} holds when $\Re~x>0$ then proceeds in the same manner as in Proposition \ref{lem:main}.  The claim follows for polynomials with multiple real zeros by passing to the limit.
\end{proof}

A statement similar to Proposition \ref{appell-type} holds for a type of recurrence which is satisfied by the Hermite polynomials.

\begin{proposition}\label{hermite-type}
Let $\PP=\{P_k\}_{k=0}^\infty$ be a set of polynomials which satisfy
\begin{equation}\label{hrec}
P_{k+1}(x)=c(-ax + b + D)P_{k}(x)
\end{equation}
for $a>0,b,c\in\RR$, for each $k\in\NN$. 
For each $k \ge 1$, all $k$ zeros of $P_k$ are real (see Remark \ref{rem-real-roots}); let $M_k$ denote that largest and $m_k$ denote the smallest zero of $P_k$.  
Then for each $k\in\NN$, $\TT_k(\PP;x)$ is non-zero when $\Re~x>M_{k+1}$ or $\Re~x<m_{k+1}$. 
\end{proposition}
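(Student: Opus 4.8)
The plan is to reuse the logarithmic-derivative strategy of Proposition \ref{lem:main}, but the first order of business is to understand precisely how the zeros of consecutive polynomials interlace, since here (unlike in Proposition \ref{lem:main}) the zeros are spread across the whole real axis rather than confined to one ray.

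\emph{Step 1 (interlacing with enclosure).} The operator factors as $c(-ax+b+D)=c\,e^{ax^2/2-bx}\,D\,e^{-ax^2/2+bx}$, so with $g(x)=e^{-ax^2/2+bx}P_k(x)$ one has $P_{k+1}(x)=c\,e^{ax^2/2-bx}\frac{d}{dx}\bigl(e^{-ax^2/2+bx}P_k(x)\bigr)$. The zeros of $g$ coincide with those of $P_k$, and since $a>0$ the Gaussian factor forces $g(x)\to 0$ as $x\to\pm\infty$. Rolle's theorem then places a zero of $g'$ (equivalently of $P_{k+1}$) in each of the $k-1$ gaps between consecutive zeros of $P_k$, while the decay at $\pm\infty$ produces one further zero in $(M_k,\infty)$ and one in $(-\infty,m_k)$. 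This accounts for all $k+1$ zeros of $P_{k+1}$, which are therefore simple, real, and satisfy the strict enclosure
\[
r_1 > s_1 > r_2 > s_2 > \cdots > r_k > s_k > r_{k+1},
\]
where $\{r_j\}_{j=1}^{k+1}$ and $\{s_j\}_{j=1}^{k}$ are the zeros of $P_{k+1}$ and $P_k$ in decreasing order; in particular $M_{k+1}=r_1>M_k$ and $m_{k+1}=r_{k+1}<m_k$. An induction based at the linear polynomial $P_1$ supplies the simplicity used above, and the case $k=0$ is immediate.

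\emph{Step 2 (reduction to a logarithmic-derivative inequality).} From the recurrence, $P_{k+1}(x)/P_k(x)=c\bigl(-ax+b+P_k'(x)/P_k(x)\bigr)$ wherever $P_k(x)\ne 0$, and likewise for $P_{k+2}/P_{k+1}$. Since the two regions $\Re x>M_{k+1}$ and $\Re x<m_{k+1}$ contain no real zero of $P_k$ or $P_{k+1}$, I may write
\[
\TT_k(\PP;x)=\bigl(P_{k+1}(x)\bigr)^2\Bigl(1-\frac{P_{k+2}(x)/P_{k+1}(x)}{P_{k+1}(x)/P_{k}(x)}\Bigr),
\]
and the essential point is that the $c(-ax+b)$ contributions cancel, so in these regions $\TT_k(\PP;x)=0$ if and only if
\[
\frac{P_{k+1}'(x)}{P_{k+1}(x)}-\frac{P_k'(x)}{P_k(x)}=\sum_{j=1}^{k+1}\frac{1}{x-r_j}-\sum_{j=1}^{k}\frac{1}{x-s_j}=0 .
\]

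\emph{Step 3 (sign of the imaginary part).} For $\Re x>M_{k+1}$ I group this difference as $\frac{1}{x-r_{k+1}}+\sum_{j=1}^{k}\frac{r_j-s_j}{(x-r_j)(x-s_j)}$, where each numerator $r_j-s_j$ is positive by Step 1. When $\Re x>M_{k+1}$ every zero $t\in\{r_j,s_j\}$ has $\Re(x-t)>0$, so for $\Im x>0$ each factor has argument in $(0,\pi/2)$ and Lemma \ref{arg-arg} gives every summand a negative imaginary part; hence the sum is non-zero. The case $\Im x<0$ follows by conjugation, and $\Im x=0$ is immediate since every summand is then positive. For $\Re x<m_{k+1}$ I substitute $x\mapsto -x$: this reflects the configuration to $-r_1<-s_1<\cdots<-s_k<-r_{k+1}$ (again an enclosure, now with largest zero $-m_{k+1}$), turns the region into $\Re(-x)>-m_{k+1}$, and merely multiplies the logarithmic-derivative difference by $-1$, reducing this case to the one already treated. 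I expect Step 1 to be the main obstacle: what is needed is not merely interlacing but the \emph{strict enclosure} of the zeros of $P_k$ by those of $P_{k+1}$ on both ends, since it is exactly this enclosure that makes the grouping in Step 3 yield numerators of a single sign and thus a summand-by-summand application of Lemma \ref{arg-arg}. Once the enclosure is in hand the sign analysis is routine, and the reflection removes any need to rederive the inequality in the left-hand region.
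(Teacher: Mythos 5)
Your proof is correct and follows essentially the same route as the paper's: the Gaussian-weight factorization $c(-ax+b+D)=c\,e^{ax^2/2-bx}De^{-ax^2/2+bx}$ plus Rolle gives the strict interlacing with enclosure, the Tur\'an expression reduces to a difference of logarithmic derivatives, and the grouped partial-fraction sum is handled term by term with Lemma \ref{arg-arg}. The only cosmetic difference is that you treat the region $\Re~x<m_{k+1}$ by the reflection $x\mapsto -x$, whereas the paper regroups the sum so the numerators $r_{j+1}-s_j$ are all negative and argues \emph{mutatis mutandis}.
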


\begin{proof}
We show that $\TT_{k-1}(\PP;x)$ is non-zero when $\Re~x_0>M_k$ or $\Re~x_0<m_{k}$ for $k\ge 1$.
Fix $k\ge 1$ and suppose $\Re~x_0>M_k$. Then
\begin{equation}\label{aheq} 
\frac{P_{k+1}(x_0)}{P_k(x_0)}-\frac{P_k(x_0)}{P_{k-1}(x_0)}\neq 0
\end{equation}
implies $\TT_{k-1}(x_0)\neq 0$ for $x_0$ which is not a zero of $P_k$ or $P_{k-1}$.  
From Rolle's theorem and the relation
\[
P_{k}(x)= ce^{ax^2/2 -bx}De^{-ax^2/2 +bx}P_{k-1}(x),
\] 
the zeros of consecutive polynomials $P_{k-1}$ and $P_{k}$ are real and strictly interlace.  Therefore, establishing \eqref{aheq} is sufficient to complete the proof, since the interval defined by 
the extremal zeros of $P_k$ contains the extremal zeros of $P_{k-1}$.  
With \eqref{hrec}, \eqref{aheq} is equivalent to
\[
\frac{P'_{k}(x_0)}{P_k(x_0)}-\frac{P'_{k-1}(x_0)}{P_{k-1}(x_0)}\neq 0.
\]
Denote the zeros of $P_k$ and $P_{k-1}$ by $\{r_j\}_{j=1}^k$ and $\{s_j\}_{j=1}^{k-1}$ respectively, ordered such that $r_1>s_1>\dots>s_{k-1}>r_k$.  Then
\begin{equation}
\frac{P'_{k}(x_0)}{P_k(x_0)}-\frac{P'_{k-1}(x_0)}{P_{k-1}(x_0)} = \frac{1}{x_0-r_k} + \sum_{j=1}^{k-1} \frac{r_j-s_j}{(x_0-r_j)(x_0-s_j)},
\end{equation}
and by Lemma \ref{arg-arg} the expression on the right is non-zero when $\Im~x_0>0$, since each term has argument in $(0,\pi]$.  
When $x_0$ is real, $\Re~x_0>M_k$ implies that all the terms in the sum are positive.  Thus, \eqref{aheq} holds when $\Re~x_0>M_{k}$.  With 
\[
\frac{P'_{k}(x_0)}{P_k(x_0)}-\frac{P'_{k-1}(x_0)}{P_{k-1}(x_0)} = \frac{1}{x_0-r_1} + \sum_{j=1}^{k-1} \frac{r_{j+1}-s_j}{(x_0-r_{j+1})(x_0-s_j)} \, ,
\]
the same argument, \textit{mutatis mutandis}, shows that \eqref{aheq} holds when $\Re~x_0<m_{k}$.  
\end{proof}

\begin{corollary}\label{hermite-turan}
Let $\PP=\{H_k\}_{k=0}^\infty$ be the set of Hermite polynomials \cite[p.~187]{R}, and let $M_k$ be the maximal zero of $H_k$, $k\in\NN.$ Then, for each $k\in\NN$, $\TT_k(\PP;x)$ is 
non-zero when $|\Re~x|>M_{k+1}.$ 
\end{corollary}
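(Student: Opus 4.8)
The plan is to exhibit the Hermite polynomials as a sequence of the type treated in Proposition \ref{hermite-type} and then exploit their symmetry. Using the standard relations for the (physicists') Hermite polynomials \cite[p.~187]{R}, namely $H_k'(x) = 2k\,H_{k-1}(x)$ together with the three-term recurrence $H_{k+1}(x) = 2x\,H_k(x) - 2k\,H_{k-1}(x)$, I would first eliminate $H_{k-1}$ to obtain the first-order recurrence
\[
H_{k+1}(x) = 2x\,H_k(x) - H_k'(x) = (2x - D)H_k(x).
\]
This matches the form $P_{k+1}(x) = c(-ax + b + D)P_k(x)$ of \eqref{hrec} with $c = -1$, $a = 2 > 0$, and $b = 0$, so the hypotheses of Proposition \ref{hermite-type} are satisfied by $\PP = \{H_k\}_{k=0}^\infty$.

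Next I would invoke the parity of the Hermite polynomials: since $H_k(-x) = (-1)^k H_k(x)$, the zero set of each $H_k$ is symmetric about the origin, whence the smallest zero $m_{k+1}$ equals $-M_{k+1}$.

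Finally, applying Proposition \ref{hermite-type} directly gives that $\TT_k(\PP;x)$ is non-zero whenever $\Re~x > M_{k+1}$ or $\Re~x < m_{k+1}$. Substituting $m_{k+1} = -M_{k+1}$, these two half-planes combine into the single condition $|\Re~x| > M_{k+1}$, which is the assertion of the corollary. There is no substantive obstacle here; the only points requiring care are checking that the normalization constant $c = -1$ (rather than a positive $c$) is permitted --- it is, since Proposition \ref{hermite-type} imposes a sign constraint only on $a$ --- and confirming the symmetry of the zeros, both of which are routine.
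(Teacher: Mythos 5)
Your proposal is correct and follows essentially the same route as the paper: establish the first-order recurrence $H_{k+1}(x) = (2x-D)H_k(x)$ (the paper derives it from the Rodrigues formula rather than from the three-term recurrence and $H_k' = 2kH_{k-1}$, but this is immaterial), apply Proposition \ref{hermite-type}, and use the parity of the Hermite polynomials to merge the two half-plane conditions into $|\Re~x| > M_{k+1}$.
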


\begin{proof}
The Rodrigues-type formula for the Hermite polynomials \cite[p.~189]{R} is 
\[
H_k(x) = (-1)^ke^{x^2}D^ke^{-x^2} \, ,
\]
whence
\begin{equation}\label{hermiteRec}
H_k(x)= -e^{x^2}De^{-x^2}H_{k-1}(x) = (2x-D)H_{k-1}(x).
\end{equation}
Now, because Hermite polynomials of even degree are even functions and Hermite polynomials of odd degree are odd functions, we may replace the condition on $x$ in Proposition \ref{hermite-type} with 
the requirement that $|\Re~x| > M_{k+1}$.  The corollary then follows from \eqref{hermiteRec} and Proposition \ref{hermite-type}.
\end{proof}

We note one more condition which is sufficient for a sequence of polynomials to have a stable Tur\'an expression.  We thank the anonymous referee for informing us that such sequences have appeared in \cite{Schur}.

\begin{proposition}\label{polar-type}
Let $\PP=\{P_k\}_{k=0}^\infty$ be a sequence of polynomials with only real non-positive zeros, such that $\deg(P_k)=k$.  If each $P_k\in\PP$ satisfies $(xD-k)P_k(x) = P_{k-1}(x)$ for $k\ge1$, then $\TT_k(\PP;x)$ is weakly Hurwitz stable.
\end{proposition}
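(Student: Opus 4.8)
The plan is to reduce the non-vanishing of $\TT_k(\PP;x)$ in the right half-plane to the non-vanishing of the derivative of an auxiliary rational function, and then to run an argument estimate in the spirit of Lemma~\ref{arg-arg}. Writing the recurrence as $P_{k-1}=xP_k'-kP_k$, the first step is to record the purely algebraic identity
\[
\TT_{k}(\PP;x)=x\bigl(P_{k+1}P_{k+2}'-P_{k+2}P_{k+1}'\bigr)-P_{k+1}P_{k+2}=x^2\,P_{k+1}(x)^2\,G'(x),\qquad G(x):=\frac{P_{k+2}(x)}{x\,P_{k+1}(x)},
\]
obtained by substituting $P_k=xP_{k+1}'-(k+1)P_{k+1}$ and $P_{k+1}=xP_{k+2}'-(k+2)P_{k+2}$ into $\TT_k=P_{k+1}^2-P_kP_{k+2}$ and simplifying (the second equality is a direct computation of $G'$). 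Since the zeros of $P_{k+1}$ and $P_{k+2}$ are non-positive, neither factor $x^2P_{k+1}(x)^2$ vanishes for $\Re~x>0$; hence it suffices to prove $G'(x)\neq 0$ whenever $\Re~x>0$.

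Next I would pin down the pole/zero structure of $G$. As in Proposition~\ref{hermite-type}, the Rodrigues-type relation $P_{k+1}=x^{k+3}D\bigl(x^{-k-2}P_{k+2}\bigr)$ together with Rolle's theorem shows that, when $P_{k+2}$ has simple, distinct, strictly negative zeros $r_1>\dots>r_{k+2}$, the zeros $s_1>\dots>s_{k+1}$ of $P_{k+1}$ are simple and strictly interlace them, $r_1>s_1>r_2>\dots>s_{k+1}>r_{k+2}$, and all lie in $(-\infty,0)$. Consequently $G$ has zeros $\{r_i\}$ and poles $\{0,s_1,\dots,s_{k+1}\}$, and since $0$ exceeds every $r_i$ and $s_j$, these arrange themselves in the fully alternating pattern pole$(0)$, zero$(r_1)$, pole$(s_1)$, zero$(r_2)$, $\dots$, zero$(r_{k+2})$. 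I would then invoke the standard consequence of full interlacing: the residues $\beta_i=P_{k+2}(p_i)/\tfrac{d}{dx}\bigl[xP_{k+1}\bigr]\big|_{p_i}$ all share a common sign, because between consecutive simple poles $P_{k+2}$ changes sign exactly once and $\tfrac{d}{dx}[xP_{k+1}]$ also alternates in sign, and the two alternations cancel.

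With this in hand the conclusion is an argument computation. From $G(x)=\lambda+\sum_{i}\beta_i/(x-p_i)$ we get $G'(x)=-\sum_i\beta_i/(x-p_i)^2$ with every $p_i\le 0$ and every $\beta_i$ of one sign. For $\Re~x>0$ each $x-p_i$ lies in the open right half-plane; if moreover $\Im~x>0$ then $\arg(x-p_i)\in(0,\pi/2)$, so $\arg\bigl((x-p_i)^{-2}\bigr)=-2\arg(x-p_i)\in(-\pi,0)$ and hence $\Im\bigl[(x-p_i)^{-2}\bigr]<0$; multiplying by the fixed-sign $-\beta_i$ makes every term of $G'(x)$ have imaginary part of one common sign, so $G'(x)\neq 0$. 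The case $\Im~x<0$ follows by conjugation, and for real $x>0$ each term is a nonzero real of one sign, so again $G'(x)\neq 0$. This proves $\TT_k(\PP;x)\neq 0$ for $\Re~x>0$ in the generic case.

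Finally I would remove the genericity assumption by a limiting argument, as in Proposition~\ref{appell-type}. The key point, which I expect to be the main obstacle to state cleanly, is that the class is preserved under the \emph{downward} recurrence: the interlacing step above shows that if $P_{k+2}$ has simple, distinct, strictly negative zeros then so do $P_{k+1}=(xD-(k+2))P_{k+2}$ and $P_k=(xD-(k+1))P_{k+1}$. Thus, given an arbitrary admissible sequence, I would approximate $P_{k+2}$ in its coefficients by polynomials $P_{k+2}^{(\varepsilon)}$ with simple, distinct, strictly negative zeros, propagate the recurrence downward to obtain $P_{k+1}^{(\varepsilon)},P_k^{(\varepsilon)}$ still of this type and converging to $P_{k+1},P_k$, and apply the generic case to each $\TT_k^{(\varepsilon)}$. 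Since $\TT_k^{(\varepsilon)}\to\TT_k$ and $\TT_k\not\equiv 0$ (otherwise $P_k P_{k+2}=P_{k+1}^2$ identically, which is impossible for interlacing real-rooted factors of distinct degrees), Hurwitz's theorem on the continuity of zeros forces $\TT_k$ to be non-vanishing on $\Re~x>0$ as well; the care needed here is exactly in checking that the degenerate configurations (multiple zeros and a possible zero at the origin, where the factors $P_{k+2}/x$ and $P_{k+1}$ partially cancel) are genuine limits of the generic ones.
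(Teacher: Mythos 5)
Your proof is correct in its main thrust and rests on the same underlying mechanism as the paper's --- interlacing of consecutive zeros combined with the half-plane argument estimate of Lemma~\ref{arg-arg} --- but the packaging is genuinely different. The paper never forms $G$ or its residues: it reduces the vanishing of the Tur\'an expression directly to the vanishing of $P_{k+2}'/P_{k+2}-P_{k+1}'/P_{k+1}-1/x$ (which is exactly your $G'/G$) and decomposes that difference of logarithmic derivatives into paired terms $\tfrac{r_j-s_{j-1}}{(x-r_j)(x-s_{j-1})}$ whose numerators all have one sign. The practical advantage of that pairing is that it degrades gracefully when zeros coincide or are multiple (a coincident pair simply contributes $0$, and the leading term $\tfrac{r_1}{x(x-r_1)}$ survives because $r_1<0$), so the paper needs no perturbation and no appeal to Hurwitz's theorem. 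Your identity $\TT_k=x^2P_{k+1}^2G'$ checks out and is a clean reformulation, and the same-sign-residue computation is valid; the price is that the residue decomposition of $G'$ requires simple poles, which is what forces you into the limiting argument in the non-generic case.

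One step of that limiting argument is justified incorrectly. Your reason that $\TT_k\not\equiv 0$ --- that $P_kP_{k+2}=P_{k+1}^2$ is ``impossible for interlacing real-rooted factors of distinct degrees'' --- is false as a general principle: the polynomials $(x+1)^k,(x+1)^{k+1},(x+1)^{k+2}$ are real-rooted with (weakly) interlacing zeros and satisfy this identity exactly. What actually rules it out here is the recurrence, and your own identity supplies the cleanest route: $\TT_k\equiv 0$ iff $G'\equiv 0$ iff $G$ is constant, i.e.\ $P_{k+2}=\lambda\, xP_{k+1}$, which forces $P_{k+2}(0)=0$; but $P_j(0)=0$ gives $P_{j-1}(0)=\bigl[(xD-j)P_j\bigr](0)=-jP_j(0)=0$, so downward induction yields $P_0(0)=0$, contradicting $\deg P_0=0$. (The same observation shows that in fact no $P_j$ with $j\ge 1$ vanishes at the origin, i.e.\ all zeros are strictly negative --- a fact the paper uses tacitly when it writes $r_1<0$.) With that repair your argument is complete.
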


\begin{proof}
Since
\[
P_{k-1}(x) = (xD-k)P_k(x) = x^{k+1}Dx^{-k}P_k(x),
\]
the zeros of $P_{k-1}$ interlace those of $P_k$  by Rolle's theorem.
The Tur\'an expression $\TT_{k-1}(\PP;x) = P_k^2(x)- P_{k+1}(x)P_{k-1}(x)$ vanishes at $x_0$ if and only if the difference $P_k(x_0)/P_{k+1}(x_0) - P_{k-1}(x_0)/P_k(x_0)$ also vanishes, provided $x_0$ is 
not a zero of $P_{k}$ or $P_{k+1}$. 
Using $(xD-k)P_k(x) = P_{k-1}(x)$, one deduces that $(P_{k-1}/P_k - P_k/P_{k+1})(x_0)\neq 0$ for $\Re~x_0> 0$ if and only if for $\Re~x_0> 0$,
\[
\frac{P'_{k+1}(x_0)}{P_{k+1}(x_0)} - \frac{P'_{k}(x_0)}{P_{k}(x_0)} - \frac{1}{x_0} \neq 0 \; .
\]
  Let $\{r_j\}_{j=1}^{k+1}$ be the zeros of $P_{k+1}$ and $\{s_j\}_{j=1}^k$ be the zeros of $P_{k}$, ordered such that
\[
r_{k+1}\le s_k \le r_k \le \cdots \le s_1 \le r_1<0.
\] 
Then, 
\begin{align}
\frac{P'_{k+1}(x_0)}{P_{k+1}(x_0)} - \frac{P'_{k}(x_0)}{P_{k}(x_0)} - \frac{1}{x_0} &= \frac{1}{x_0-r_1} -\frac{1}{x_0} + \sum_{j=2}^{k+1} \frac{1}{x_0-r_j} - \frac{1}{x_0-s_{j-1}}\nonumber\\
&=   \frac{r_1}{x_0(x_0-r_1)} + \sum_{j=2}^{k+1} \frac{r_j-s_{j-1}}{(x_0-r_j)(x_0-s_{j-1})}. \label{redundant}
\end{align}
Since every numerator in \eqref{redundant} has the same sign, for real $x_0\ge 0$, the sum must be non-zero.  Furthermore, if $\Im~x_0>0$, then by Lemma \ref{arg-arg}, each term in \eqref{redundant} 
has argument in $(0,\pi]$ and thus the sum of the terms must be non-zero.  The case $\Im~x_0< 0$ follows by symmetry.
\end{proof}

\begin{proof}[Proof of Theorem \ref{main-theorem}]
Statements \eqref{mt-1}, \eqref{mt-3}, and \eqref{mt-2} are given by Propositions \ref{lem:main}, \ref{hermite-type}, and \ref{polar-type}, respectively.
\end{proof}

\subsection{Tur\'an expressions for Laguerre and Jensen polynomials}

Given a real entire function $f(x)=\sum_{k=0}^\infty \gamma_k x^k/k!$, the {\em $n$-th Jensen polynomial associated with $f$} (cf. \cite{CC}) is 
\[
g_n(x)= \sum_{k=0}^n \binom{n}{k}\gamma_k x^k,
\]
where $n\in\NN$.
It is known that if $f$ is a locally uniform limit of polynomials whose zeros are all real, then all of its associated Jensen polynomials also have only real zeros.  Furthermore, $g_n(x/n)\to f$ locally uniformly \cite[Lemma 2.2]{CC}.  The relation $(n-xD)P_n(x) = nP_{n-1}(x)$ is satisfied when the sequence $\{P_n\}_{n=0}^\infty$ are Jensen polynomials associated to some entire function $f$ \cite[Proposition 2.1]{CC}, and also when it is the sequence of Laguerre polynomials \cite[p. 202]{R}. 

We have computed with \emph{Maxima} that the Tur\'an expression, $\TT_k$, is non-zero in the open left half-plane for the sequence of Laguerre polynomials up to $k=50$. 
Fisk suggests that furthermore the determinants of Hankel matrices of Laguerre polynomials are non-zero in the open left half-plane \cite[p. 653]{Fisk}.  This motivates the following question.

\begin{question}\label{q:jensen}
Let $\PP=\{P_k\}_{k=0}^\infty$ be a sequence of polynomials with only real non-positive zeros, such that $\deg(P_k)=k$.  Suppose that for positive integers $k$, $P_k\in\PP$ satisfies 
\[
(k-xD)P_k(x) = kP_{k-1}(x).
\]
Is $\TT_k(\PP;x)$ weakly Hurwitz stable for all $k\in\NN?$ 
\end{question}

Let $f(x)=\sum_{k=0}^\infty \gamma_k x^k/k!$, with $\gamma_k>0$ for all $k\in\NN$, be a real entire function that is a locally uniform limit of polynomials with only real zeros. T.~Craven and 
G.~Csordas have shown that the Jensen polynomials of $f$ satisfy $\TT_k(\{g_n\}_{n=0}^\infty;x)\ge 0$ for $x\ge0$ \cite[Theorem 2.3]{CC}.  If the answer to Question \ref{q:jensen} is affirmative, 
the Tur\'an expressions for the Laguerre polynomials (reflected across the origin), the Jensen polynomials associated with $f$, and for sequences $\{P_k\}_{k=0}^\infty$ generated by the relation $P_k=f(D)x^k (=x^kg_k(1/x))$, will 
all be weakly Hurwitz stable.    We make some stronger conjectures for the sequences of Bell and Laguerre polynomials in the next section 
(Conjecture \ref{bell-conjecture}, Conjecture \ref{laguerre-conjecture}). 

\section{Higher order Tur\'an expressions}\label{high-turan}

For a real entire function $f(x)$, the $n$-th \emph{extended Laguerre expression}, $\LL_n(f(x))$, is determined by
\[
|f(x+iy)|^2 = f(x+iy)f(x-iy) = \sum_{n=0}^\infty \LL_n(f(x))y^{2n}. 
\]
As remarked in \cite[p. 343]{CV} it follows that, 
\begin{equation}\label{extended-laguerre}
\LL_n(f(x)) = \sum_{j=0}^{2n} \frac{(-1)^{n+j}}{(2n)!}\binom{2n}{j}f^{(j)}(x)f^{(2n-j)}(x).
\end{equation}
Precise information on the locations of zeros for the $\LL_n(f(x))$ has been obtained by Dilcher and Stolarsky \cite{DS2}.  Note that one can also consider $\LL_n(f^{(k)}(x))$, which shifts the 
indices on the derivatives in \eqref{extended-laguerre} by $k$.  It is known that a real entire function $f$ can be obtained as a locally uniform limit of polynomials with only real zeros 
\emph{if and only if} $\LL_n(f(x))\ge 0$ for all $x\in\RR$, and for all $n=0,1,2,\dots$ \cite[p. 343]{CV}.  In analogy to the extended Laguerre expression, denote the $k$-th 
\emph{extended Tur\'an expression} for the sequence of polynomials $\PP = \{P_k(x)\}_{k=0}^\infty$ by
\begin{equation}\label{extended-turan}
\TT_k^{(n)}(\PP;x) = \sum_{j=0}^{2n} \frac{(-1)^{n+j}}{(2n)!}\binom{2n}{j}P_{j+k}(x)P_{2n+k-j}(x).
\end{equation}
When $n=1$ in \eqref{extended-turan}, the original Tur\'an expression is recovered, $\TT_k(\PP;x) = \TT_{k}^{(1)}(\PP;x)$.  

Let $G(x,t)=\sum_{n=0}^\infty P_n(x)t^n/n!$ be the exponential generating function for a sequence of polynomials $\PP=\{P_k\}_{k=0}^\infty$, and let $G^{(k)}(x,t) = (\partial/\partial t)^k G(x,t)$.
Then, similar to \eqref{extended-laguerre},
\begin{equation}\label{turan-gen}
G^{(k)}(x,it)G^{(k)}(x,-it) = \sum_{n=0}^\infty \TT^{(n)}_k(\PP;x)t^{2n}.
\end{equation}
The extended Tur\'an inequalities were investigated by M.~L.~Patrick \cite{P2}, who proved their positivity on prescribed regions of the real axis for the ultraspherical, generalized Laguerre, 
Hermite, and Chebyshev polynomials (first and second kind), and also for the sequences of their derivatives.

Let $\PP = \{T_k(x)\}_{k=0}^\infty$, where $T_k$ is the $k$-th Chebyshev polynomial of the first kind \cite[p. 301]{R}.  It is known that
\[ 
(T_k(x))^2 - T_{k+1}(x)T_{k-1}(x) = 1-x^2,
\]
for $k\in\NN$, and thus $\TT_k(\PP; x)$ always has zeros at $\pm1$.  This observation can be extended as follows.  

\begin{proposition}\label{prop-high-cheby}
If $\PP = \{T_k(x)\}_{k=0}^\infty$, where $T_k$ is the $k$-th Chebyshev polynomial of the first kind, then
\[
\TT_k^{(n)}(\PP;x) = \frac{2^{2n-1}}{(2n)!}(1-x^2)^n
\]
for $n\ge1$ and $k\in\NN.$
\end{proposition}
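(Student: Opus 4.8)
The plan is to work through the exponential generating function in \eqref{turan-gen} rather than the explicit sum \eqref{extended-turan}, exploiting the Binet-type representation of the Chebyshev polynomials. Setting $\alpha = x + \sqrt{x^2-1}$ and $\beta = x - \sqrt{x^2-1}$, one has $\alpha\beta = 1$, $\alpha + \beta = 2x$, and the standard identity $T_k(x) = \tfrac12(\alpha^k + \beta^k)$. Substituting this into $G(x,t)=\sum_k T_k(x)t^k/k!$ immediately gives the closed form $G(x,t) = \tfrac12(e^{\alpha t}+e^{\beta t})$, and hence, after differentiating $k$ times in $t$, $G^{(k)}(x,t) = \tfrac12(\alpha^k e^{\alpha t}+\beta^k e^{\beta t})$.

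First I would form the product appearing in \eqref{turan-gen}. Expanding $G^{(k)}(x,it)G^{(k)}(x,-it)$ into its four terms, the two diagonal terms contribute $\tfrac14(\alpha^{2k}+\beta^{2k})$, while the two cross terms carry a factor $(\alpha\beta)^k = 1$ and combine into $\tfrac12\cos\big((\alpha-\beta)t\big) = \tfrac12\cos\big(2\sqrt{x^2-1}\,t\big)$. Thus
\[
G^{(k)}(x,it)G^{(k)}(x,-it) = \tfrac14(\alpha^{2k}+\beta^{2k}) + \tfrac12\cos\big(2\sqrt{x^2-1}\,t\big).
\]
The crucial structural point is that all of the $k$-dependence now sits in the $t$-independent term $\tfrac14(\alpha^{2k}+\beta^{2k})$; this is precisely why $\TT^{(n)}_k$ will be independent of $k$ once $n\ge1$.

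Next I would expand the cosine as a power series in $t$. Since $\cos\big(2\sqrt{x^2-1}\,t\big) = \sum_{m\ge0}\frac{(-1)^m 2^{2m}(x^2-1)^m}{(2m)!}t^{2m} = \sum_{m\ge0}\frac{2^{2m}(1-x^2)^m}{(2m)!}t^{2m}$, reading off the coefficient of $t^{2n}$ and comparing with \eqref{turan-gen} yields $\TT^{(n)}_k(\PP;x) = \frac{2^{2n-1}}{(2n)!}(1-x^2)^n$ for every $n\ge1$, as claimed. As a sanity check, the $n=0$ term recovers $\tfrac14(\alpha^k+\beta^k)^2 = T_k(x)^2$ by $\alpha\beta=1$, consistent with $\TT^{(0)}_k = P_k^2$.

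The only genuine obstacle is justifying the closed form of $G$ together with the manipulations involving $\sqrt{x^2-1}$: since $\alpha$ and $\beta$ carry a square root, one should note that the final expressions are even in $\sqrt{x^2-1}$ (only $\alpha\beta$, $\alpha+\beta$, and the even series $\cos\big(2\sqrt{x^2-1}\,t\big)=\sum 2^{2m}(1-x^2)^m t^{2m}/(2m)!$ ever appear), so everything is a polynomial in $x$ and the identity holds for all $x\in\CC$, not merely where $x^2-1<0$. As an alternative that avoids the square root entirely, one can start from \eqref{extended-turan}, apply the product-to-sum formula $T_aT_b = \tfrac12(T_{a+b}+T_{|a-b|})$ to $T_{j+k}T_{2n+k-j}$, observe that $\sum_{j=0}^{2n}(-1)^{n+j}\binom{2n}{j} = 0$ annihilates the $T_{2n+2k}$ (hence $k$-dependent) contribution for $n\ge1$, and evaluate the residual sum $\sum_{j=0}^{2n}(-1)^{n+j}\binom{2n}{j}T_{2|j-n|}(x)$ via $T_m(\cos\theta)=\cos m\theta$ and the binomial theorem applied to $(1-e^{2i\theta})^{2n}$; this produces $2^{2n}\sin^{2n}\theta = 2^{2n}(1-x^2)^n$ and gives the same conclusion.
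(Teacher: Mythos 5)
Your proposal is correct and follows essentially the same route as the paper: both write $G^{(k)}(x,t)$ in the Binet form $\tfrac12(\alpha^k e^{\alpha t}+\beta^k e^{\beta t})$ with $\alpha\beta=1$, reduce $G^{(k)}(x,it)G^{(k)}(x,-it)$ to a $t$-independent term plus $\tfrac12\cos\bigl(2t\sqrt{x^2-1}\bigr)$, and read off the coefficient of $t^{2n}$ via \eqref{turan-gen}. Your closing remarks (the evenness in $\sqrt{x^2-1}$ and the alternative product-to-sum argument) are sound additions but not needed beyond what the paper already does.
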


\begin{proof}
The exponential generating function for the Chebyshev polynomials (cf. \cite[p. 301]{R}) is
\begin{equation}\label{tc-gen}
G(x,t) = e^{xt}\cosh(t\sqrt{x^2-1}) = \sum_{n=0}^\infty \frac{T_n(x)}{n!}t^n.
\end{equation}
Re-writing \eqref{tc-gen} with exponential functions, one obtains
\[
2G^{(k)}(x,t)=(x+\sqrt{x^2-1})^ke^{t(x+\sqrt{x^2-1})}+(x-\sqrt{x^2-1})^ke^{t(x-\sqrt{x^2-1})}.
\]
Consequently,
\begin{align}
G^{(k)}(x,it)G^{(k)}(x,-it) &=  \frac{1}{4}\left((x+\sqrt{x^2-1})^{2k} + (x-\sqrt{x^2-1})^{2k}\right)\nonumber\\ 
& \qquad +\frac{1}{4}\left(e^{i2t\sqrt{x^2-1}} +e^{-i2t\sqrt{x^2-1}}\right)\nonumber\\
&= \frac{1}{2}T_{2k}(x) + \frac{1}{2}\cos\left(2t\sqrt{x^2-1}\right) \label{cheby-turan}.
\end{align}
Using \eqref{turan-gen}, the claim then follows from \eqref{cheby-turan} by inspecting the Maclaurin expansion of $\cos\left(2t\sqrt{x^2-1}\right)$ in $t$.  
\end{proof}

From the generating function,
\[
\sum_{n=0}^\infty \frac{U_n(x)}{n!}t^n = e^{xt}\left(\frac{x}{\sqrt{x^2-1}}\sinh(t\sqrt{x^2-1})+\cosh(t\sqrt{x^2-1})\right) \, ,
\]
a formula for the extended Tur\'an expressions can also be obtained for the Chebyshev polynomials of the second kind \cite[p. 301]{R}, $\{U_n\}_{n=0}^\infty$, using the same method of proof as in 
Proposition \ref{prop-high-cheby}. 

\begin{proposition}\label{prop-high-cheby2}
If $\PP = \{U_k(x)\}_{k=0}^\infty$, where $U_k$ is the $k$-th Chebyshev polynomial of the second kind, then
\[
\TT^{(n)}_k(\PP;x) = \frac{2^{2n-1}}{(2n)!}(1-x^2)^{n-1}
\]
for $n\ge1$ and $k,n\in\NN$.
\end{proposition}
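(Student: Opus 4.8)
The plan is to follow the method of Proposition \ref{prop-high-cheby} verbatim, exploiting the given exponential generating function for $\{U_k\}$ together with relation \eqref{turan-gen}. First I would abbreviate $w = \sqrt{x^2-1}$ and set $\alpha = x+w$, $\beta = x-w$, so that $\alpha\beta = 1$ and $\alpha - \beta = 2w$. Writing the hyperbolic functions in the generating function as exponentials and using $w-x = -\beta$, the generating function collapses to
\[
G(x,t) = \frac{1}{2w}\left(\alpha\, e^{t\alpha} - \beta\, e^{t\beta}\right),
\]
after which differentiating $k$ times in $t$ is immediate and yields $G^{(k)}(x,t) = \frac{1}{2w}\left(\alpha^{k+1} e^{t\alpha} - \beta^{k+1} e^{t\beta}\right)$.

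Next I would form the product $G^{(k)}(x,it)G^{(k)}(x,-it)$. Upon expanding, the two diagonal terms produce $\alpha^{2k+2} + \beta^{2k+2}$, while the two cross terms each carry the factor $(\alpha\beta)^{k+1} = 1$ and combine into $e^{2itw} + e^{-2itw} = 2\cos(2tw)$. Using the standard identity $\alpha^{m} + \beta^{m} = 2T_m(x)$ for the Chebyshev polynomials of the first kind, this produces
\[
G^{(k)}(x,it)G^{(k)}(x,-it) = \frac{1}{2w^2}\left(T_{2k+2}(x) - \cos(2tw)\right).
\]
The crucial structural point is that $\alpha\beta = 1$ removes all $k$-dependence from the cross terms; the only surviving $k$-dependence lives in the constant $T_{2k+2}(x)$, which therefore contributes solely to the $t^0$ coefficient.

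Finally, I would compare Maclaurin coefficients against \eqref{turan-gen}. Substituting $w^2 = x^2-1$ and expanding $\cos(2tw) = \sum_{n\ge 0} \frac{(-1)^n 2^{2n} w^{2n}}{(2n)!}\,t^{2n}$, the constant $T_{2k+2}(x)$ together with the $n=0$ term of the cosine affect only $\TT^{(0)}_k(\PP;x) = U_k(x)^2$, which lies outside the claimed range $n\ge 1$. For each $n \ge 1$ the coefficient of $t^{2n}$ reads
\[
\TT^{(n)}_k(\PP;x) = -\frac{(-1)^n 2^{2n}}{2\,(2n)!}\,(x^2-1)^{n-1},
\]
and rewriting $(x^2-1)^{n-1} = (-1)^{n-1}(1-x^2)^{n-1}$ collapses the sign to give the claimed $\frac{2^{2n-1}}{(2n)!}(1-x^2)^{n-1}$. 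I expect the only genuine care to be required in the sign bookkeeping of this last step---tracking the $(-1)^n$ from the cosine expansion against the $(-1)^{n-1}$ from converting $(x^2-1)^{n-1}$ into $(1-x^2)^{n-1}$---since the rest of the argument is forced by the $\alpha\beta = 1$ collapse.
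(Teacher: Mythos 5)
Your computation is correct, and it is precisely the argument the paper intends: the paper gives no separate proof of this proposition, stating only that it follows from the generating function by the same method as Proposition \ref{prop-high-cheby}, which is exactly what you carry out (the collapse via $\alpha\beta=1$, the identity $\alpha^{m}+\beta^{m}=2T_m(x)$, and the Maclaurin expansion of $\cos(2tw)$ all check out, including the final sign bookkeeping).
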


We were unable to find similar formulas for the other orthogonal polynomial families, but the calculation for the extended Tur\'an expressions for the Chebyshev polynomials suggests the 
following conjectures.   

\begin{conjecture}\label{bell-conjecture}
If $\BB = \{B_k(x)\}_{k=0}^\infty$, where $B_k$ is the $k$-th univariate Bell polynomial, then $\TT^{(n)}_k(\BB;x)$ is weakly Hurwitz stable for $n\ge1$ and $k,n\in\NN$.
\end{conjecture}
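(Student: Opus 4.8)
The plan is to mimic the computation that made Proposition~\ref{prop-high-cheby} tractable, namely to pass through the exponential generating function, and then to reduce weak Hurwitz stability of $\TT^{(n)}_k(\BB;x)$ to a non-vanishing statement for each individual Taylor coefficient in the right half-plane. Since $G(x,t)=\sum_{n\ge0}B_n(x)t^n/n!=e^{x(e^{t}-1)}$, writing $u=xe^{t}$ and using the defining property $(u\,\partial_u)^k e^{u}=B_k(u)e^{u}$ of the Bell polynomials, one first establishes the closed form
\[
G^{(k)}(x,t)=B_k(xe^{t})\,e^{x(e^{t}-1)}.
\]
Substituting $t\mapsto\pm it$ into \eqref{turan-gen}, using the formal identities $e^{it}e^{-it}=1$ and $e^{it}+e^{-it}=2\cos t$, and factoring $B_k(u)=\prod_{j=1}^{k}(u-\rho_j)$ into its real, non-positive roots $\rho_j\le0$, yields the generating identity
\[
\sum_{n\ge0}\TT^{(n)}_k(\BB;x)\,t^{2n}=\left[\prod_{j=1}^{k}\big(x^{2}-2\rho_j x\cos t+\rho_j^{2}\big)\right]e^{2x(\cos t-1)}.
\]
For real $x\ge0$ and real $t$ the right-hand side equals $|G^{(k)}(x,it)|^{2}\ge0$, which recovers positivity of the full sum (not of each coefficient) and is consistent with the proven case $n=1$ from Proposition~\ref{lem:main}; it also reproduces the small cases, e.g. $\TT_0^{(1)}=-x$ and $\TT_1^{(1)}=-x^{3}$.

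Setting $\tau=1-\cos t$, the generating function becomes a genuine power series in the single variable $\tau$, namely $\Psi(x,\tau)=e^{-2x\tau}\prod_{j=1}^{k}\big((x-\rho_j)^{2}+2\rho_j x\,\tau\big)$. The next step is to prove that every $\tau$-coefficient $\psi_m(x):=[\tau^{m}]\Psi(x,\tau)$ is weakly Hurwitz stable. Because $\Psi$ is a product of the zero-free factor $e^{-2x\tau}$ with $k$ factors that are affine in $\tau$, I would attack the $\psi_m$ by exactly the logarithmic-derivative technique of Proposition~\ref{lem:main} together with Lemma~\ref{arg-arg}: for $\Re x>0$ one tracks the argument of the relevant rational functions and reduces to a sum of terms lying in a common half-plane. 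The sign of each numerator $2\rho_j x$ is uniform for $x$ in the right half-plane precisely because $\rho_j\le0$, which is the structural feature driving the $n=1$ argument; the small cases confirm the expectation (for $k=2$ one finds $\psi_m(x)=x^{2}\tfrac{(-2x)^{m}}{m!}\big((x+1)^{2}+m\big)$, whose zeros all have non-positive real part).

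The remaining, and hardest, step is to transfer back from the $\tau$-grading to the $t$-grading, since
\[
\TT^{(n)}_k(\BB;x)=\sum_{m\le n}\psi_m(x)\,[t^{2n}]\tau^{m},\qquad \tau=\sum_{i\ge1}\frac{(-1)^{i+1}}{(2i)!}t^{2i},
\]
so the transition coefficients $[t^{2n}]\tau^{m}$ carry the alternating signs of $1-\cos t$. Thus, unlike the Chebyshev case where the expression collapses to a single power $(1-x^{2})^{n}$ (Proposition~\ref{prop-high-cheby}), here the weakly stable pieces $\psi_m$ are recombined with coefficients of mixed sign, and stability is manifestly \emph{not} preserved term by term; I expect this sign-mixing to be the main obstacle. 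Two routes seem promising. In route (i) one would show that the change-of-grading array $\big([t^{2n}]\tau^{m}\big)_{n,m}$ acts as a stability-preserving transform on the span of the $\psi_m$ --- for instance by establishing a total-positivity or multiplier-sequence property --- so that weak Hurwitz stability of the $\psi_m$ transfers directly to the $\TT^{(n)}_k(\BB;x)$; such a result would simultaneously explain the Chebyshev and Laguerre computations. In route (ii) one would bypass the $\psi_m$ and, for fixed $n$, count the zeros of $(-i)^{d}\TT^{(n)}_k(\BB;ix)$ (with $d=\deg\TT^{(n)}_k$) in the closed upper half-plane via Theorem~\ref{sturm-hurwitz-criterion} and Remark~\ref{zero-count}, seeking a Sturm-sequence sign pattern that is uniform in $n$; this is the machinery the authors already used for the finite verifications, so making it work in general is the natural target.
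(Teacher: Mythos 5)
This statement is Conjecture~\ref{bell-conjecture}: the paper offers no proof of it, only numerical verification for $1\le k\le 50$, $1\le n\le 5$ via Theorem~\ref{sturm-hurwitz-criterion} and Remark~\ref{zero-count}, so there is no argument of the authors to compare yours against. Taken on its own terms, your reduction is correct as far as it goes and is genuinely useful: the closed form $G^{(k)}(x,t)=B_k(xe^{t})e^{x(e^{t}-1)}$ is right (it follows from $(u\partial_u)^k e^{u}=B_k(u)e^{u}$ with $u=xe^{t}$), the product formula for $\sum_n \TT^{(n)}_k(\BB;x)t^{2n}$ checks out against the small cases, and the substitution $\tau=1-\cos t$ cleanly isolates where the difficulty lives. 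But what you have written is a strategy with two unclosed gaps, not a proof.

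The first gap is that the weak Hurwitz stability of every coefficient $\psi_m(x)=[\tau^m]\Psi(x,\tau)$ is only verified for $k\le 2$; for general $k$ you have $\psi_m(x)=Q_m(0)/m!$ with $Q_m=(D_\tau-2x)^mP(\tau,x)$, where the $\tau$-roots of $P$ are the complex numbers $-(x-\rho_j)^2/(2\rho_j x)$, and the logarithmic-derivative argument of Proposition~\ref{lem:main} does not apply verbatim because neither $-2x$ nor these roots are real, so Lemma~\ref{arg-arg} gives no common half-plane for the resulting terms without further work. The second gap is the one you flag yourself, and it is fatal to the argument as it stands: the transition coefficients $[t^{2n}]\tau^m$ have mixed signs (e.g.\ $[t^4]\tau=-1/24$ while $[t^4]\tau^2=1/4$), and weakly Hurwitz stable polynomials are not closed under nonnegative linear combinations in any case --- this is precisely the error in Fisk's drafted proof that the paper exposes in Example~\ref{coneless}. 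So route (i) would require not merely stability of each $\psi_m$ but some quantitative common structure (a proper-position or interlacing relation among the $\psi_m$ compatible with the specific transition array), and no candidate for that structure is identified; route (ii) is just a restatement of the finite verification the authors already performed. Until one of these is carried out, the conjecture remains open, and your contribution is the (correct and worthwhile) generating-function identity
\[
\sum_{n\ge0}\TT^{(n)}_k(\BB;x)\,t^{2n}=e^{2x(\cos t-1)}\prod_{j=1}^{k}\bigl(x^{2}-2\rho_j x\cos t+\rho_j^{2}\bigr),
\]
rather than a proof of stability.
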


\begin{conjecture}\label{laguerre-conjecture}
If $\PP = \{L_k(x)\}_{k=0}^\infty$, where $L_k$ is the $k$-th Laguerre polynomial, then $\TT_k^{(n)}(\PP;-x)$ is weakly Hurwitz stable for $n\ge1$ and $k,n\in\NN$.
\end{conjecture}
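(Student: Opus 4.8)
The plan is to attack the conjecture through the generating-function formulation \eqref{turan-gen} rather than the logarithmic-derivative method of Section \ref{section-turan}, since for $n\ge 2$ the extended expression \eqref{extended-turan} is not a $2\times 2$ determinant and the interlacing arguments of Propositions \ref{lem:main}--\ref{polar-type} have no evident analogue. First I would pass to the reflected polynomials $\widetilde L_k(x):=L_k(-x)$, which have only non-positive zeros, so that the conjecture becomes the weak Hurwitz stability of $\TT_k^{(n)}(\{\widetilde L_j\};x)$. The exponential generating function of the reflected sequence is the classical
\[
\widetilde G(x,t)=e^{t}I_0\!\left(2\sqrt{xt}\right),
\]
where $I_0$ is the modified Bessel function of order zero.

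Next I would exploit a cancellation peculiar to this generating function. Writing $\widetilde G^{(k)}(x,t)=e^{t}Q_k(x,t)$ with $Q_k(x,t)=(1+\partial_t)^k I_0(2\sqrt{xt})$, the factor $e^{t}$ drops out of the product in \eqref{turan-gen} because $e^{it}e^{-it}=1$; hence
\[
\TT_k^{(n)}(\{\widetilde L_j\};x)=\bigl[t^{2n}\bigr]\,Q_k(x,it)\,Q_k(x,-it),
\]
which in words is the extended Laguerre expression \eqref{extended-laguerre} of the section $s\mapsto Q_k(x,s)$ evaluated at $s=0$. Since $I_0(2\sqrt{xt})$ has order $1/2$ in $t$, so does $Q_k(x,\cdot)$; it is therefore of genus zero with no exponential factor, and Hadamard's theorem gives $Q_k(x,t)=\widetilde L_k(x)\prod_{\ell}(1-t/\zeta_\ell(x))$, where $\zeta_\ell(x)$ are its $t$-zeros and $Q_k(x,0)=\widetilde L_k(x)\neq 0$ for $\Re x>0$. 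Substituting the product into $Q_k(x,iy)Q_k(x,-iy)$ collapses each conjugate pair to $1+y^2/\zeta_\ell(x)^2$, so the conjecture is equivalent to
\[
\TT_k^{(n)}(\{\widetilde L_j\};x)=\widetilde L_k(x)^2\,e_n\!\bigl(\{\zeta_\ell(x)^{-2}\}_\ell\bigr)\neq 0\qquad(\Re x>0),
\]
where $e_n$ is the $n$-th elementary symmetric function.

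The remaining task splits into a zero-localization step and a symmetric-function step. For the first I would show that for $\Re x>0$ all $t$-zeros $\zeta_\ell(x)$ lie in the open left half-plane. This is transparent when $k=0$: the zeros of $I_0(2\sqrt{xt})$ are exactly $\zeta_\ell(x)=-j_{0,\ell}^2/(4x)$, where $j_{0,\ell}$ are the positive zeros of the Bessel function $J_0$, and $\Re x>0$ forces $\Re\zeta_\ell(x)<0$. For $k\ge 1$ one applies $(1+\partial_t)^k$ and invokes a Lucas-type theorem for entire functions of genus zero (the closure of the Hermite--Biehler class under differentiation) to preserve the half-plane containment of zeros. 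With this in hand the case $x>0$ is immediate, since then $\zeta_\ell(x)<0$ is real, $\zeta_\ell(x)^{-2}>0$, and $e_n>0$; this recovers the positivity of $\TT_k^{(n)}$ on the positive axis, consistent with \cite[Theorem 2.3]{CC} and \cite{CV}.

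The hard part is the symmetric-function step for complex $x$. Knowing only that $\Re\zeta_\ell(x)<0$ is insufficient: for $n\ge 2$ the products of $n$ of the numbers $\zeta_\ell(x)^{-2}$ can span an arc of width exceeding $\pi$, so $e_n$ is not forced to be non-zero by a half-plane argument alone. What rescues the base case $k=0$ is that the $\zeta_\ell(x)^{-2}=16x^2/j_{0,\ell}^4$ are all \emph{positive} multiples of $x^2$, hence perfectly aligned, whence $e_n\bigl(\{\zeta_\ell(x)^{-2}\}\bigr)=16^n x^{2n}e_n\bigl(\{j_{0,\ell}^{-4}\}\bigr)\neq 0$ for every $n$. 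The crux for $k\ge 1$ is therefore quantitative: one must show that the zeros of $(1+\partial_t)^kI_0(2\sqrt{xt})$ deviate from the aligned family $-j_{0,\ell}^2/(4x)$ by little enough that the arguments of $\zeta_\ell(x)^{-2}$ stay confined to a sector rotating with $2\arg x$ and narrow enough to keep $e_n\neq 0$ for all $n$ at once. A perturbative analysis suggests the large-$\ell$ zeros are negligibly displaced while only $O(k)$ small zeros move appreciably, but controlling those few zeros uniformly in $x$ and $n$ is exactly the obstacle and is the step I expect to require genuinely new input. Finally I would record that the case $n=1$ is precisely Question \ref{q:jensen}; there the logarithmic-derivative method applies, but the unequal weights $1/k$ and $1/(k+1)$ arising from the Laguerre recurrence $(k-xD)L_k=kL_{k-1}$ prevent the telescoping that made Proposition \ref{polar-type} succeed, which is why even $n=1$ is open.
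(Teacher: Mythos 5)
First, a point of context: the statement you set out to prove is an open conjecture in the paper. The authors give no proof of Conjecture \ref{laguerre-conjecture}; they only report a machine verification for $1\le k\le 50$, $1\le n\le 5$ using Theorem \ref{sturm-hurwitz-criterion} and Remark \ref{zero-count}. So there is no argument of theirs to compare yours against, and the standard your proposal must meet is that of a complete proof. It does not meet it, and you say so yourself. That said, the reduction you carry out is correct and substantive as far as I can check: the exponential generating function of the reflected Laguerre polynomials is indeed $e^tI_0(2\sqrt{xt})$, the factor $e^t$ cancels in \eqref{turan-gen}, $Q_k(x,\cdot)=(1+\partial_t)^kI_0(2\sqrt{xt})$ has order $1/2$ and hence genus zero, and the identity $\TT_k^{(n)}=\widetilde L_k(x)^2e_n\bigl(\{\zeta_\ell(x)^{-2}\}\bigr)$ follows from the Hadamard product. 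In particular your argument genuinely settles the case $k=0$ for all $n$, since there $\TT_0^{(n)}$ is an explicit positive multiple of $x^{2n}$ (sanity check: for $n=1$ it gives $16x^2\sum_\ell j_{0,\ell}^{-4}=x^2/2$, which matches $\widetilde L_1^2-\widetilde L_0\widetilde L_2$ directly), and that already goes beyond what the paper establishes. The half-plane containment step for $k\ge1$ is also fine and does not really need a named closure theorem: since $(1+\partial_t)^kf=e^{-t}\partial_t^k(e^tf)$ and $\partial_t\log(e^tf)=1+\sum_\ell(t-\zeta_\ell)^{-1}$ has positive real part whenever $\Re t>0$ and $\Re\zeta_\ell<0$, each application of $1+\partial_t$ keeps the $t$-zeros in the open left half-plane.

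The genuine gap is exactly where you locate it, and it is fatal to the proposal as a proof: for $k\ge1$, knowing $\Re\zeta_\ell(x)<0$ tells you nothing about $e_n\bigl(\{\zeta_\ell(x)^{-2}\}\bigr)$, because the squares $\zeta_\ell(x)^{-2}$ can have arguments spread over an arc of width up to $2\pi$; indeed even $n=1$ (the sum $\sum_\ell\zeta_\ell(x)^{-2}$) is not controlled by the half-plane information, so the argument does not close for any $n$ once $k\ge1$. The "narrow rotating sector" you would need is a quantitative statement about how little the zeros of $(1+\partial_t)^kI_0(2\sqrt{xt})$ deviate from the aligned ray $\{-j_{0,\ell}^2/(4x)\}$, uniformly in $x$ with $\Re x>0$ and uniformly in $n$; you offer a heuristic (only $O(k)$ small zeros move appreciably) but no mechanism, and this is precisely the missing idea. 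Two minor corrections: the $n=1$ case of the conjecture is an \emph{instance} of Question \ref{q:jensen} (the reflected Laguerre sequence satisfies $(k-xD)\widetilde L_k=k\widetilde L_{k-1}$ with non-positive zeros), not identical to it, since that question is posed for a general class; and your closing remark about the failure of telescoping in Proposition \ref{polar-type} is a plausible diagnosis but not an argument. In summary: a correct and promising reduction, with a complete proof only for $k=0$; the conjecture remains open for $k\ge1$.
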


\begin{conjecture}\label{bessel-conjecture}
If $\PP = \{Y_k(x)\}_{k=0}^\infty$, where 
\[
Y_k(x) = \sum_{j=0}^k \frac{(k+j)!}{2^jj!(k-j)!}x^j
\]
is the $k$-th Bessel polynomial, then $\TT_k^{(n)}(\PP;x)$ is weakly Hurwitz stable for $n\ge1$ and $k,n\in\NN$.
\end{conjecture}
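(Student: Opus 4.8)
The plan is to exploit the one feature that sets the Bessel polynomials apart from the families treated above: although the $Y_k$ are not real-rooted, all $k$ zeros of $Y_k$ lie in the open left half-plane $\Re~x<0$ (a classical theorem; see Grosswald). In particular each $Y_k$ is itself weakly Hurwitz stable, consecutive $Y_k$ share no zeros in $\Re~x\ge0$, and the ratios $Y_{k-1}/Y_k$ are analytic and zero-free there. This half-plane confinement of the zeros is the natural substitute for the real-rootedness and real-axis interlacing that drove Propositions \ref{lem:main}--\ref{polar-type}, and it is what makes the conjecture plausible.

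First I would set up a generating-function reduction valid for every $n$. Writing $G(x,t)=\sum_{k\ge0} Y_k(x)t^k/k!$ and $F(t)=G^{(k)}(x,t)$ (derivatives in $t$), one has $F^{(j)}(0)=Y_{k+j}(x)$, so comparing \eqref{extended-laguerre} with \eqref{extended-turan} gives the identity
\[
\TT_k^{(n)}(\PP;x)=\LL_n\bigl(F\bigr)\big|_{t=0},
\]
in agreement with \eqref{turan-gen}. The exponential generating function can be written in closed form as
\[
G(x,t)=\frac{1}{\sqrt{1-2xt}}\exp\!\left(\frac{1-\sqrt{1-2xt}}{x}\right),
\]
so that, in principle, $\TT_k^{(n)}$ is accessible through $G^{(k)}(x,it)G^{(k)}(x,-it)$, exactly as in the proof of Proposition \ref{prop-high-cheby}. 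The aim of this route is to show that for $\Re~x>0$ the entire function $t\mapsto F(t)$ lies in a class narrow enough to force $\LL_n(F)(0)\neq0$.

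For the base case $n=1$ I would argue directly from the three-term recurrence $Y_{k+1}(x)=(2k+1)x\,Y_k(x)+Y_{k-1}(x)$. Dividing $\TT_{k-1}(\PP;x)=Y_k^2-(2k+1)x\,Y_kY_{k-1}-Y_{k-1}^2$ by $Y_kY_{k-1}$ (nonzero for $\Re~x>0$) reduces the claim to
\[
v-\frac1v\neq(2k+1)x,\qquad v:=\frac{Y_k(x)}{Y_{k-1}(x)},
\]
for $\Re~x>0$. The left-hand side is controlled by the mapping properties of the ratio $v$: the Bessel polynomials are the denominators in the continued-fraction (Pad\'e) expansion of $e^{2/x}$, and the associated consecutive ratios are positive-real functions (they carry the right half-plane into itself). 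Granting this, a short argument playing the sign of $\Re(v-1/v)$ against $(2k+1)\Re~x>0$ should rule out solutions with $\Re~x>0$, in the spirit of the argument-counting used with Lemma \ref{arg-arg}.

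The main obstacle is precisely that Lemma \ref{arg-arg} and the partial-fraction bookkeeping of the earlier proofs are unavailable once the zeros leave the real axis: the terms $\tfrac{r_j-s_j}{(x-r_j)(x-s_j)}$ now have complex $r_j,s_j$, and their arguments are no longer uniformly controlled on $\Re~x>0$. Replacing that device requires the Hermite--Biehler/positive-real theory for half-plane-rooted polynomials, i.e. establishing a genuine half-plane interlacing between $Y_{k-1}$ and $Y_k$ and pinning down the range of $v$ (and, for $n>1$, of the higher combinations $G^{(k)}(x,\pm it)$) over the right half-plane. For $n>1$ the generating-function reduction is clean but yields no immediate sign information, since the extended Laguerre inequalities govern positivity on the real axis rather than nonvanishing for complex $x$; bridging that gap --- perhaps by showing that $F$ is, for $\Re~x>0$, a limit of polynomials whose zeros are confined to a half-plane --- is where I expect the real work to lie.
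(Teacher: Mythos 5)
The statement you are trying to prove is not proved in the paper: it is stated as Conjecture~\ref{bessel-conjecture} and is supported there only by computer verification (via Theorem~\ref{sturm-hurwitz-criterion} and Remark~\ref{zero-count}) for $1\le k\le 50$ and $1\le n\le 5$. So there is no paper proof to compare against, and the real question is whether your sketch closes the problem. It does not, and you are candid about this, but it is worth pinpointing exactly where the gaps are. Your background facts are correct --- the zeros of $Y_k$ do lie in the open left half-plane, the three-term recurrence $Y_{k+1}=(2k+1)xY_k+Y_{k-1}$ is right, and the generating-function identity \eqref{turan-gen} does reduce $\TT_k^{(n)}$ to an extended Laguerre expression of $G^{(k)}(x,\cdot)$ at $t=0$ --- but none of these yields a nonvanishing statement for complex $x$ with $\Re~x>0$.

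The most concrete gap is in your base case $n=1$. Even if one grants that $v=Y_k/Y_{k-1}$ maps the right half-plane into itself (which itself requires a Hermite--Biehler-type interlacing between consecutive Bessel polynomials that you do not establish and that does not follow merely from each $Y_k$ being Hurwitz stable), the conclusion $v-1/v\neq(2k+1)x$ does not follow from a sign comparison: if $\Re~v>0$ then also $\Re(1/v)>0$, so $\Re(v-1/v)=\Re~v-\Re(1/v)$ has no definite sign (already for real $v\in(0,1)$ it is negative). The device that makes the paper's Propositions~\ref{lem:main}, \ref{hermite-type}, and \ref{polar-type} work is that the partial-fraction terms $\frac{r_j-s_j}{(x-r_j)(x-s_j)}$ all have arguments confined to a common open half-plane (Lemma~\ref{arg-arg}), which forces their sum to be non-zero; you correctly observe that this collapses once the $r_j,s_j$ are non-real, but you do not supply a replacement with the same ``all terms on one side of a line'' structure. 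For $n>1$ the situation is worse: the extended Laguerre machinery gives real-axis positivity, not half-plane nonvanishing, and your proposed bridge (realizing $F=G^{(k)}(x,\cdot)$ as a limit of half-plane-rooted polynomials for each fixed $x$ with $\Re~x>0$) is stated as a hope rather than carried out. In short, the proposal is a reasonable research program but contains no complete argument for any case of the conjecture, including $n=1$.
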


Conjectures \ref{bell-conjecture}, \ref{laguerre-conjecture}, and \ref{bessel-conjecture} have been verified for $1\le k \le 50$, and $1\le n\le 5$ using 
Theorem \ref{sturm-hurwitz-criterion} and Remark \ref{zero-count} implemented in Maxima. 

For the extended Laguerre expressions, Dilcher and Stolarsky have proved the following.

\begin{theorem}[{A special case of \cite[Proposition 3.1]{DS2}}]\label{ds-extended-theorem}
If all the zeros of the degree $d$ polynomial $P$ are located in the interval $[-1,1]$, then all the zeros of $\TT^{(n)}_k(\PP;x)$ lie inside or on the unit 
circle for $0 \le k \le d$, $0 \le n \le d$, where 
\[
\PP = \{D^dP, D^{d-1}P, \dots, P, 0, \dots\}.
\]
\end{theorem}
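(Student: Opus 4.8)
The plan is to peel the theorem down to a statement about a single polynomial and then localize the zeros of its extended Laguerre expression by a Gauss--Lucas argument applied to an auxiliary stable polynomial. First I would perform the index bookkeeping that exhibits $\TT_k^{(n)}$ as an honest $\LL_n$. Writing $P_i=D^{d-i}P$ (with $D^sP\equiv0$ for $s<0$) and substituting into \eqref{extended-turan}, the two derivative orders in the $j$-th summand are $d-k-j$ and $d-k-2n+j$; their sum is independent of $j$, so setting $Q:=D^{\,d-k-2n}P$ they equal $Q^{(2n-j)}$ and $Q^{(j)}$, and matching against \eqref{extended-laguerre} term by term yields
\[
\TT_k^{(n)}(\PP;x)=\LL_n\!\bigl(Q(x)\bigr),\qquad Q=D^{\,d-k-2n}P,
\]
valid whenever $d-k-2n\ge0$, which is the substance of the theorem. (The remaining index values $k+2n>d$ are degenerate: $\TT_k^{(n)}\equiv0$ when $k+n>d$, and one gets a constant multiple of $P^2$ when $d-k=n$; these, and the few truncated cases in between, match the general statement of \cite[Proposition~3.1]{DS2}.) Since the zeros of $P$ lie in $[-1,1]$, the Gauss--Lucas theorem (differentiation keeps zeros in their convex hull, cf. \cite{RS}) places all zeros of $Q$ in $[-1,1]$ as well. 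Thus it suffices to prove: if $Q$ has all its zeros in $[-1,1]$, then every zero of $\LL_n(Q)$ lies in the closed unit disk.

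Next I would reformulate $\LL_n(Q)$ through the product expansion underlying \eqref{extended-laguerre}. If $Q(x)=c\prod_{j=1}^{D}(x-\alpha_j)$ with $\alpha_j\in[-1,1]$, then $|Q(x+iy)|^2=c^2\prod_{j}\bigl((x-\alpha_j)^2+y^2\bigr)$, so the coefficient of $y^{2n}$ is the elementary symmetric function $\LL_n(Q(x))=c^2\,e_{D-n}\bigl((x-\alpha_1)^2,\dots,(x-\alpha_D)^2\bigr)$. Rewriting the complementary symmetric function, for $z$ not a zero of $Q$ one gets
\[
\LL_n(Q(z))=Q(z)^2\,e_n\!\left(\frac{1}{(z-\alpha_1)^2},\dots,\frac{1}{(z-\alpha_D)^2}\right).
\]
Fixing $z_0$ with $|z_0|>1$ we have $Q(z_0)\neq0$, so it remains to show $e_n(v_1,\dots,v_D)\neq0$, where $v_j=(z_0-\alpha_j)^{-2}$.

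The finishing idea is a scaling together with a stability argument. Because $e_n$ is homogeneous of degree $n$, multiplying each argument by the nonzero scalar $z_0^2-1$ gives $e_n(\tilde v)=(z_0^2-1)^n e_n(v)$ with $\tilde v_j=(z_0^2-1)/(z_0-\alpha_j)^2$, so it is equivalent to prove $e_n(\tilde v)\neq0$. Here the one analytic input is the positivity lemma
\[
\Re\!\left[\frac{z_0^2-1}{(z_0-\alpha)^2}\right]>0\qquad\text{for }|z_0|>1,\ \alpha\in[-1,1],
\]
which places every $\tilde v_j$ in the open right half-plane. Granting this, form $q(t)=\prod_{j=1}^{D}(t+\tilde v_j)$, whose coefficient of $t^{D-m}$ is $e_m(\tilde v)=q^{(D-m)}(0)/(D-m)!$. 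Its zeros $-\tilde v_j$ lie in the open left half-plane, so by Gauss--Lucas the zeros of every derivative $q^{(D-m)}$ do as well; since $0$ is not in the open left half-plane, $q^{(D-m)}(0)\neq0$, i.e. $e_m(\tilde v)\neq0$ for every $m$. In particular $e_n(\tilde v)\neq0$, which shows all zeros of $\TT_k^{(n)}(\PP;x)$ satisfy $|x|\le1$.

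The main obstacle is the positivity lemma, the sole nontrivial estimate; everything else above is formal. I would prove it by clearing denominators: the sign of the real part equals that of $\Re\bigl[(z_0^2-1)(\bar z_0-\alpha)^2\bigr]$, which is a quadratic $f(\alpha)$ whose values at $\alpha=\pm1$ are $(|z_0|^2-1)\,|z_0\mp1|^2>0$. A short one-variable analysis then shows its vertex always falls outside $[-1,1]$ (the relevant discriminant factors as $(R+3)^2$ with $R=|z_0|^2$, and the location inequality reduces to $(R-1)^2\ge0$), so positivity on all of $[-1,1]$ follows from the positive endpoint values. For $n=1$ this lemma already recovers Theorem \ref{ds-wronskian}, since $\LL_1(Q)=(Q')^2-QQ''$ is the Wronskian; the argument above is precisely its extension to every $n$.
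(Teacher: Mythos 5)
The paper itself offers no proof of this statement---it is quoted as a special case of \cite[Proposition 3.1]{DS2}---so there is no internal argument to compare yours against. Your main chain of reasoning is correct and self-contained in the range $k+2n\le d$: the identification $\TT_k^{(n)}(\PP;x)=\LL_n\bigl(D^{d-k-2n}P(x)\bigr)$, the symmetric-function formula $\LL_n(Q(z))=Q(z)^2\,e_n\bigl((z-\alpha_1)^{-2},\dots,(z-\alpha_D)^{-2}\bigr)$, the rescaling by $z_0^2-1$, and the Gauss--Lucas/Hurwitz-stability step showing that all elementary symmetric functions of points in the open right half-plane are nonzero all check out. The positivity lemma is also true; I verified it by a slightly different route than your discriminant computation: writing $R=|z_0|^2$, $x_0=\Re z_0$, the quadratic $f(\alpha)=\Re\bigl[(z_0^2-1)(\bar z_0-\alpha)^2\bigr]$ has leading coefficient $A=\Re(z_0^2)-1$ and vertex at $x_0(R-1)/A$; when $A>0$ one has $x_0^2>1$ and $R-1-A=2(\Im z_0)^2\ge 0$, so the vertex has modulus greater than $1$, while for $A\le 0$ concavity plus the positive endpoint values $(R-1)|z_0\mp 1|^2$ suffices. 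This is a clean, complete proof of the substantive case.

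The gap is in your parenthetical dismissal of the truncated cases $k+n<d<k+2n$. Appealing to ``the general statement of \cite[Proposition~3.1]{DS2}'' for them is circular here, and, more seriously, the conclusion actually fails in that range, so no argument can close the gap: take $d=3$, $k=0$, $n=2$, and $P(x)=(x-1)^2(x+1)$. Then $\TT_0^{(2)}(\PP;x)=\tfrac14(P')^2-\tfrac13PP''=\tfrac1{12}(x-1)^2(3x^2+2x+11)$, whose nonreal zeros $(-1\pm 4\sqrt2\,i)/3$ have modulus $\sqrt{11/3}>1$. (The truncation replaces the Laguerre weights by $\binom{2n}{j+2n-m}$ with $m=d-k$, which overweights the $PP''$ term relative to $(P')^2$ and pushes zeros out of the disk.) So the theorem as transcribed, with the range $0\le k\le d$, $0\le n\le d$, is itself too broad; your argument is valid precisely for $k+2n\le d$ together with the genuinely degenerate cases $k+n>d$ (the expression vanishes identically) and $k+n=d$ (a positive multiple of $P^2$). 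You should state that restriction explicitly rather than wave at the intermediate cases.
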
 
\noindent
It seems reasonable to suspect that when a sequence of polynomials has interlacing zeros and the zero locations are similar to a sequence of consecutive derivatives, then approximate versions of the Theorem \ref{ds-extended-theorem} hold.

\end{document}